\newtheorem{lemma}{Lemma}
\newtheorem{corollary}{Corollary}
\newtheorem{proposition}{Proposition}
\newtheorem{definition}{Definition}
\newtheorem{remark}{Remark}
\begin{document}

\title{Orthogonal multiplications of type $[3,4,p], p\leq 12$}
\author{Quo-Shin CHi}
\address{Department of Mathematics, Washington University, St, Louis, MO 63130, USA}
\email{chi@math.wustl.edu}
\author{Haiyang Wang}
\thanks{The second author was partially supported by the Fundamental Research Funds for the Central Universities, China}
\address{School of Mathematical Sciences, Laboratory of Mathematics and Complex Systems, Beijing Normal University, Beijing 100875, P.R. China}
\email{wanghy15@mail.bnu.edu.cn}

\begin{abstract} We describe the moduli space of orthogonal multiplications of type $[3,4,p], p\leq 12,$ and its application to the hypersurface theory.
\end{abstract}

\footnote{2010 {\em Mathematics Subject Classification}. 11E25, 15B48}
\keywords{Orthogonal multiplications, Isoparametric hypersurfaces, Moduli space}
\maketitle


\section{Introduction} An orthogonal multiplication of type $[m,n,p], m\leq n,$ is a bilinear map $F:{\mathbb R}^m\times {\mathbb R}^n\rightarrow {\mathbb R}^p$ such that $|F(x,y)|=|x||y|$ for all $x$ and $y$. The orthogonal multiplication is {\em full}\, if the image of $F$ spans ${\mathbb R}^p$; it follows that necessarily $n\leq p\leq mn$ when $F$ is full. Alternatively, we may define $x\circ y:=F(x,y)$, so that with respect to $\circ$, the orthogonal multiplication $F$ may be thought of as a sort of norm-preserving product among the involved Euclidean spaces. The most well known of such ``products'' may be the ones of the normed algbras ${\mathbb R}, {\mathbb C}, {\mathbb H},$ and the octonion algebra ${\mathbb O}$. Conversely, Hurwitz~\cite{H} proved, up to domain and range equivalence, that an orthogonal multiplication of type $[l,l,l]$ satisfies $l=1,2,4,$ or 8, and arises from the respective normed algebra product. This was generalized to the type $[l,m,m]$ by Radon~\cite{R}, who showed that such an orthogonal multiplication exists if and only if $l\leq\rho(m)$, the Hurwitz-Radon number. 
Adem~\cite{A},~\cite{A1} classified orthogonal multiplications of type $[l,m,m+1]$, while Gauchman and Toth~\cite{GT},~\cite{GT1} classified the type $[l,m,m+2]$.

There have been many extensive studies, in addition to the aforementioned, of the admissibility problem of orthogonal multiplications, i.e., of the existence of a given type $[m,n,p]$, in the literature (see~\cite{S} and the comprehensive references therein). 
The case when $m=n$ relates in a particularly interesting way to geometry in that the Hopf map $\Phi:(x,y)\mapsto (2x\circ y,|x|^2-|y|^2)$
from $S^{2m-1}$ to $S^p$ turns out to be harmonic, which has spurred many investigations~\cite{EL},~\cite{EL1},~\cite{HMX},~\cite{PT},~\cite{T},~\cite{To1},~\cite{To2},~\cite{TW},~\cite{WXZ} (and the references therein), to mention just a few.

Along a different direction, there appears to have been only sporadic studies of the moduli space, up to domain and range equivalence, of the types $[m,n,p]$ when one fixes $m\leq n$ and varies $p$ (it is understood that the orthogonal multiplications are full in each ${\mathbb R}^p$); note that necessarily $n\leq p\leq mn$. In this regard, we mention the work of Parker~\cite{P} when $m=n=3$, Toth~\cite{To} for an elegant, intrinsic structural framework in the case $m=n$, Guo~\cite{Gu} in the case $m=2$, and Toth~\cite{To1} in the comprehensive case $m=n=4$.

Lying between Parker's moduli $[3,3,p], p\leq 9,$ of dimension 3, and Toth's moduli $[4,4,p], p\leq 16$, of dimension 24, is the moduli $[3,4,p], p\leq 12$, which the title of the present paper addresses. Its subspace consisting of the type $[3,4,8]$ plays a vital role in the classification, initiated by Cartan~\cite{Car2},~\cite{Car4} of isoparametric hypersurfaces with four principal curvatures in the sphere~\cite{C}. 

Slightly extending~\cite{To}, we remark in Section~\ref{s2} that the moduli space of orthogonal multiplications of type $[m,n,p],$ where $m\leq n$ are fixed and $n\leq p\leq mn,$ up to range equivalence, can be identified with a compact convex set ${\mathcal C}$ in $\wedge^2 {\mathbb R}^m\otimes\wedge^2{\mathbb R}^n$ that is left fixed by the $SO(m)\otimes SO(n)$-actions. Therefore, up to domain and range equivalence, the moduli space is \,${\mathcal C}/SO(m)\otimes SO(n)$.

In the case of moduli space of type $[3,4,p], 4\leq p\leq 12,$ to be denoted by ${\mathcal M}_{(3,4,12)}$ henceforth, it is more convenient to represent the 18-dimensional
$\wedge^2{\mathbb R}^3\otimes\wedge^2{\mathbb R}^4$ via the isomorphism
$$
\wedge^2{\mathbb R}^3\otimes\wedge^2{\mathbb R}^4\simeq (\wedge^2{\mathbb R}^3\otimes\wedge^2{\mathbb R}^3)\oplus (\wedge^2{\mathbb R}^3\otimes\wedge^2{\mathbb R}^3)
$$
by the fact that over ${\mathbb R}^4$ a 2-form is the direct sum of a self-dual and an anti-self-dual form, which can then be identified with a 3-by-6 matrix with two 3-by-3 blocks that we refer to as the Parker matrix $C=\begin{pmatrix} A&B\end{pmatrix}$. 

For an orthogonal multiplication of type $[3,4,p],\, 4\leq p\leq 12,$ its associated Parker matrix can be so normalized, through domain equivalence, that $A$ is diagonal and $B$ is upper triangular, leaving us with 9 independent variables that give rise to a 9-dimensional coarse fundamental domain of ${\mathcal M}_{(3,4,12)}$ (see~\eqref{c1} and~\eqref{grand}). With this,
 in Section~\ref{348}, we show that the moduli of type $[3,4,8]$ has a rich structure carrying itself a grand moduli of dimension 5, denoted by ${\mathcal G}_{(3,4,8)}$, and an anomalous moduli ${\mathcal X}_{(3,4,8)}$ of dimension 3 that corresponds to the case when $A$ and $B$ in the normalized Parker matrix are both diagonal. 

As is the case for any coarse fundamental domain, there is a boundary identification to ``glue'' the domain into the actual moduli space. We point out in Section~\ref{348} that
these points are the ones, representing certain points in ${\mathcal G}_{(3,4,8)}$, for which at least one of the three off-diagonal entries of $B$ of the Parker matrix is zero, where each of them is identified with one or two other points by a subgroup of 
$S_3$, the permutation group on three letters, which in essence permutes the three off-diagonal entries of $B$ without destroying the diagonal-upper-triangular pattern of $A$ and $B$. 
As an application of this observation, in Section~\ref{reverse}, we consider the situation when the range equivalence is more rigidly restricted to a fixed decomposition ${\mathbb R}^8={\mathbb R}^4\oplus{\mathbb R}^4$, so that only $SO(4)\oplus SO(4)\subset SO(8)$ is allowed. This is pertinent to the classification of isoparametric hypersurfaces with four principal curvatures and multiplicity pair $(7,8)$ in $S^{31}$, to which a certain orthogonal multiplication of type $[3,4,8]$ is associated that respects the decomposition of ${\mathbb R}^8$ into two copies of ${\mathbb R}^4$ intrinsic to the isoparametric structure~\cite[Section 7]{C}. We prove in Section~\ref{reverse} that all such orthogonal multiplications of type $[3,4,8]$ come from the grand moduli ${\mathcal G}_{(3,4,8)}$, a property crucial for the classification of the isoparametric hypersurfaces in $S^{31}$ (see the ending paragraph of Section~\ref{reverse}).

We conclude in Section~\ref{finali} that ${\mathcal M}_{(3,4,12)}$ consists of the generic open set of dimension 9, and for each $0\leq n\leq 4,$ the generic moduli of type $[3,4,8+n]$ is of dimension $5+n$, which is achieved by studying the perturbation, via the normal exponential map, of the anomalous moduli ${\mathcal X}_{(3,4,8)}$.   
Inside ${\mathcal X}_{(3,4,8)}$ there sits a 1-dimensional
moduli of type $[3,4,7]$, which degenerates to the quaternion multiplication (of type $[3,4,4]$). In particular, there are no orthogonal multiplications of type $[3,4,5]$ and 
$[3,4,6]$; we remark that this is also a consequence of the aforementioned general results in~\cite{A},~\cite{A1},~\cite{GT},~\cite{GT1}.

The second author would like to express his gratitude to Professor Zizhou Tang for his guidance and encouragement. 
The first author would also like to thank him for the warm hospitality he received during his visit in Beijing in the Fall of 2016.

\section{The moduli space of orthogonal multiplications}\label{s2} In~\cite{To}, an elegant, intrinsic geometric picture is given to capture the moduli space of all orthogonal multiplications of type 
$[m,m,p]$ for fixed $m$ and for varying $p$, $m\leq p\leq m^2.$ In fact, this is completely general for the moduli space of orthogonal multiplications of type $[m,n,p]$, where $m\leq n$ are fixed and $n\leq p\leq mn$, which we briefly sketch.

A full orthogonal multiplication 
$F: {\mathbb R}^m \times {\mathbb R}^n \rightarrow {\mathbb R}^p, m\leq n,$ satisfies $n\leq p\leq mn$ for dimension reasons. Accordingly, we assume $p\leq mn$.  

Let $e_1, \cdots, e_m$ be an orthonormal basis for ${\mathbb R}^m$ and let $f_1,\cdots,f_n$ be an orthonormal basis for ${\mathbb R^n}$. We identify ${\mathbb R}^{mn}$ with
${\mathbb R}^m\otimes{\mathbb R}^n$. We set
$$
x\circ y=F(x,y).
$$ 
Following~\cite{P}, we define
$$
F_{ij,kl}=\langle e_i\circ f_j,e_k\circ f_l\rangle,
$$
where $\langle \cdot,\cdot\rangle$ denotes the standard Euclidean inner product. We have the properties
\begin{equation}\label{eq}
\aligned
&F_{ij,kl}=F_{kl,ij},\quad \forall i,j,k,l,\\
&F_{ij,il}=F_{li,ji}=0, \quad j\neq l,\\
&F_{ij,kl}=-F_{il,kj},\quad i\neq k, \\
& F_{ij,kl}=-F_{kj,il},\quad j\neq l,
\endaligned
\end{equation}
from which the quantity 
\begin{equation}\label{bracket}
[ij,kl]:=F_{ik,jl}
\end{equation}
satisfies
$$
[ij,kl]=-[ji,kl],\quad [ij,kl]=-[ij,lk].
$$

Intrinsically, this means that if we define
\begin{equation}\label{eqq}
\aligned
&g:=\sum_{(ij,kl)} F_{ij,kl}\;(e_i\otimes f_j)\odot (e_k\otimes f_l),\\
&\iota:=\sum_{(ij,ij)}\;(e_i\otimes f_j)\odot (e_i\otimes f_j),
\endaligned
\end{equation}
in $S^2({\mathbb R}^m\otimes {\mathbb R}^n)$, set
$$
W:=\text{span}((a\otimes b)\odot (a\otimes b))\subset S^2({\mathbb R}^m\otimes {\mathbb R}^n),
$$
and let $W^{\perp}$ be the space perpendicular to $W$ in $S^2({\mathbb R}^m\otimes {\mathbb R}^n)$. Then by~\eqref{eq},
$$
g-\iota\in W^{\perp}.
$$
Moreover, if we let
\begin{equation}
g-\iota := c=\sum_{(ij,kl)} c_{ij,kl}\;(e_i\otimes f_j)\odot (e_k\otimes f_l),
\end{equation}
then $c_{ij,kl}$ satisfies
$$
c_{ij,kl}=[ik,jl].
$$
Conversely, for any element $c$ in $W^\perp$, the symmetric tensor 
$$
g:=\iota+c:=\sum_{(ij,kl)} F_{ij,kl}\;(e_i\otimes f_j)\odot (e_k\otimes f_l)
$$
has the same properties as in~\eqref{eq} so that 
$$
[ik,jl]:=F_{ij,kl}
$$ 
satisfies
$$
[ij,kl]=-[ji,kl],\quad [ij,kl]=-[ij,lk].
$$
As a consequence, 
$$
W^\perp\simeq \wedge^2{\mathbb R}^m\otimes \wedge^2{\mathbb R}^n.
$$ 
Meanwhile, each $c\in W^\perp$ induces a traceless symmetric endomorphism
$$
c^*: {\mathbb R}^m\otimes {\mathbb R}^n\rightarrow {\mathbb R}^m\otimes {\mathbb R}^n\quad c^*:e_i\otimes f_j\mapsto \sum_{kl}c_{ij,kl}\;e_k\otimes f_l,
$$
In particular, when $c$ is induced by an orthogonal multiplication $F$ as in~\eqref{eqq}, we have the extra property that for 
$$
X=\sum_{ij}X_{ij}\;e_i\otimes f_j\in {\mathbb R}^m\otimes {\mathbb R}^n,
$$
there follows
$$
\langle (id+c^*)(X),X\rangle=\sum_{ij,kl}F_{ij,kl}X_{ij}X_{kl}=\langle\sum_{ij}X_{ij}\;e_i\circ f_j,\sum_{ij}X_{ij}\;e_i\circ f_j\rangle\geq 0,
$$
so that $\iota+c$ regarded as a symmetric endomorphism is semi-positive-definite. To recover $x\circ y$, by the universal property of tensor product, there is a unique 
endomorphism
$D:{\mathbb R}^m\otimes{\mathbb R}^n\rightarrow {\mathbb R}^p\subset {\mathbb R}^m\otimes{\mathbb R}^n$ such that 
$x\circ y=D(x\otimes y)$. We have
$$
F_{ij,kl}=\langle e_i\circ f_j,e_k\circ f_l\rangle=\langle D^{tr}D(e_i\otimes f_j),e_k\otimes f_l\rangle.
$$
That is, by~\eqref{eqq},
$$
D^{tr}D=g^*,
$$
where $g^*$ is the associated endormorphism of $g$.

Conversely, given a semi-positive-definite symmetric $g=\iota+c$, 
let $g^*=id+c^*$ be the associated endomorphism. Let $E_0$ be the 0-eigen space of $g^*$, and let $E_0^\perp$ be its orthogonal complement in ${\mathbb R}^m\otimes{\mathbb R}^n$. Let $n_1,\cdots,n_p$ be eigenvectors of $E_0^\perp$ with eigenvalues $\lambda_1,\cdots,\lambda_p>0$. 
For any $p$ orthonormal elements $v_1,\cdots,v_p$ in ${\mathbb R}^m\otimes{\mathbb R}^n$ 
define
$$
D_v:{\mathbb R}^m\otimes{\mathbb R}^n\rightarrow {\mathbb R}^m\otimes{\mathbb R}^n,\quad n_i\mapsto \sqrt{\lambda_i} v_i,\quad E_0\mapsto 0.
$$
Then $D_v$ satisfies 
\begin{equation}\label{D}
D_v^{tr}D_v=g^*,
\end{equation}
and, moreover, this exhausts all possible $D_v$ satisfying $D_v^{tr}D_v=g^{*}$. We define 
$$
e_i\circ_v f_j:=D_v(e_i\otimes f_j)\in \text{span}(v_1,\cdots,v_p)\simeq{\mathbb R}^p,
$$
from which it is checked by~\eqref{D} that for $|x|=|y|=1$
$$
|x\circ_v y|^2 = \sum_{(ij,kl)} x_i y_jx_ky_l\;\langle D_v(e_i\otimes f_j),D_v(e_k\otimes f_l)\rangle
= \sum_{(ij,kl)} F_{ij,kl}\;x_i y_jx_ky_l=1,
$$
where we set $x=e_1$ and $y=se_1+te_2, s^2+t^2=1,$ for a convenient calculation by~\eqref{eq}. That is, $F_v(x,y)=x\circ_v y$ so defined is an orthogonal multiplication.
Note that $D_v$ differs from $D_{w}$ only by an orthonormal basis change in ${\mathbb R}^m\otimes{\mathbb R}^n$, so that they induce the same orthogonal multiplication up to range equivalence.

As a result, up to range equivalence, the set of orthogonal multiplications is a compact convex subset ${\mathcal C}$ (call it the {\em preliminary moduli space} in the following) of $W^\perp\simeq \wedge^2{\mathbb R}^m\otimes \wedge^2{\mathbb R}^n$, consisting of $c$ for which $\iota+c$ is semi-positive-definite. Its interior consists of
full orthogonal multiplications, and the non-full orthogonal multiplications lie on the boundary.

Since the domain equivalence $SO(m)\otimes SO(n)$ acts on $W^\perp$ and fixes\, ${\mathcal C}$, the moduli space of orthogonal multiplications of type $[m,n,p]$, up to domain and range equivalence, where $m\leq n$ are fixed and $n\leq p\leq mn,$ is\, ${\mathcal C}/SO(m)\otimes SO(n)$. 

\section{Type $[2,4,8]$}\label{s3} We recount the result in~\cite{Gu}, specialized to our situation with a slightly different approach, to gain motivation for the subsequent development. Since the preliminary moduli space ${\mathcal C}\subset \wedge{\mathbb R}^2\otimes \wedge^2{\mathbb R}^4=\wedge^2{\mathbb R}^4$. This case simply says if we choose the basis elements
$u_5,\cdots,u_8$ to be $e_2\circ f_1,\cdots,e_2\circ f_4$, and  complete the basis to $u_1,\cdots, u_4, u_5,\cdots u_8$, then the two Hurwitz matrices defined by
$$
F_a=\text{the matrix whose}\;b\text{th row is}\; e_a\circ f_b,\quad 1\leq a\leq 2,
$$
are 
$$
F_1=\begin{pmatrix} c_1&w_1\end{pmatrix},\quad F_2=\begin{pmatrix}0&Id\end{pmatrix},
$$
where the 4-by-4 $w_1$ is skew-symmetric and gives the above $\wedge^2{\mathbb R}^4$. Knowing that the connected ${\mathcal C}$ is six-dimensional, we can immediately 
find its structure:
$$
\aligned
&F_1=\begin{pmatrix} UDU^{tr}&UKU^{tr}\end{pmatrix},\quad D:=\begin{pmatrix}\sigma_1&0&0&0\\0&\sigma_2&0&0\\0&0&\sigma_2&0\\0&0&0&\sigma_1\end{pmatrix},\quad
K:=\begin{pmatrix} 0&0&0&-\mu\\0&0&-\nu&0\\0&\nu&0&0\\\mu&0&0&0\end{pmatrix},\\
&UU^{trr}=Id,\quad \sigma_1^2+\mu^2=1,\quad \sigma_2^2+\nu^2=1.
\endaligned
$$
where the adjoint orbit of $K$ by $U$ is only 4-dimensional. 

It is easily checked that $F_1$ so given and $F_2:=\begin{pmatrix}0&I\end{pmatrix}$ form a Hurwitz system of dimension 6. Conversely, every $F_1$ can be brought to this form by appropriate coordinate changes. This is because the skew-symmetry of $w_1$ allows us to find an orthogonal matrix $U$ such that $w_1=UKU^{tr}$, so that by performing a coordinate change on $span(f_1,\cdots,f_4)$
and on $span(u_5,\cdots,u_8)$, leaving $F_2$ fixed, we may assume $F_1=\begin{pmatrix}U^{tr}c_1&K\end{pmatrix}$. Since 
$$
(U^{tr}c_1)(U^{tr}c_1)^{tr}+KK^{tr}=Id,
$$
it follows that the first term in the sum is diagonal, or rather, the rows of $U^{tr}c_1$ are mutually orthogonal, so that we may find an orthogonal matrix $W$ such that $U^{tr}c_1=DW$. Hence, after a coordinate change on $span(u_1,\cdots,u_4)$, fixing $F_2$, we may assume 
\begin{equation}\label{222}
F_1=\begin{pmatrix}D&K\end{pmatrix},\quad F_2=\begin{pmatrix}0&Id\end{pmatrix},
\end{equation} 
which constitute the moduli space of type $[2,4,p], p\leq 8$,
through the action $SO(2)\otimes SO(4)$ on $\wedge^2{\mathbb R}^2\otimes\wedge^2{\mathbb R}^4$, where the left action is trivial while the right one is the adjoint action.

\section{Type $[3,4,8]$}\label{348}
\subsection{The setup}\label{4.1} 
Given an orthogonal multiplication $F$ of type $[3,4,8]$, its Hurwitz matrices $F_a,1\leq a\leq 3,$ defined by
$$
F_a:=\text{the matrix whose}\; b\text{th row is}\;e_a\circ f_b
$$
with $x\circ y:=F(x,y)$, satisfies the Hurwitz condition
\begin{equation}\label{Hz}
F_aF_b^{tr}+F_bF_a^{tr}=2\delta_{ab}\, Id,\quad 1\leq a,b\leq 3.
\end{equation}
We can choose the ``anchor'' matrix to be 
$$
F_3=\begin{pmatrix}0&Id\end{pmatrix}
$$
as in the type $[2,4,8]$. We seek to express $F_1$ and $F_2$ in the most convenient form for analysis.~\eqref{222} will be our guide to formulate $F_1$. To achieve the goal, recall~\eqref{bracket} and consider 
the 3-by-6 matrix
\begin{equation}\label{P}
C:=\begin{pmatrix}[ij,kl]\end{pmatrix},
\end{equation}
referred to henceforth as the Parker matrix~\cite{P}, which defines a linear map 
\begin{equation}\label{-1}
C:\wedge^2({\mathbb R}^4)\rightarrow\wedge^2({\mathbb R}^3).
\end{equation}
We identify the basis of the target space by
$$
e_1\wedge e_2,\quad e_3\wedge e_1,\quad e_3\wedge e_2,
$$
and identify the domain space by
$$
\wedge^{2}({\mathbb R}^4)=\wedge^2({\mathbb R}^3)\oplus\wedge^2({\mathbb R}^3)
$$
where the right hand side consists of the space of self-dual forms and of anti-self-dual forms, where the former is spanned by
$$
f_1\wedge f_2+f_3\wedge f_4,\quad f_1\wedge f_3+f_4\wedge f_2,\quad f_1\wedge f_4+f_2\wedge f_3,
$$
and the latter by
$$
f_1\wedge f_2-f_3\wedge f_4,\quad f_1\wedge f_3-f_4\wedge f_2,\quad f_1\wedge f_4-f_2\wedge f_3.
$$
As a consequence
\begin{equation}\label{AB}
C =\begin{pmatrix} A&B\end{pmatrix},
\end{equation}
where $A$ and $B$ are of size 3-by-3. 

As in~\cite{P}, $A$ and $B$ can be written in the form
$$
A=U^{tr}D_1W_1U,\quad B=V^{tr}D_2W_2V,
$$
where $U,V,W_1,W_2$ are orthogonal and $D_1 ,D_2$ are diagonal, so that by applying orthogonal changes to row and column spaces, we may assume
\begin{equation}\label{0}
C=\begin{pmatrix}D&T\end{pmatrix}
\end{equation}
with D diagonal and T upper triangular. This means that we may assume
\begin{equation}\label{-10}
\aligned
&[31,12\pm 34]= [32,12\pm34]=[32,13\pm 42]=0,\\
&[12,13+42]=[12,14+23]=[31,14+23]=0;
\endaligned
\end{equation}
in other words, we now have the identities
\begin{equation}\label{1}
\aligned
&F_{31,12}=F_{33,14}=0,\quad F_{31,22}=F_{33,24}=0,\quad F_{31,23}=F_{34,22}=0,\\
&F_{11,23}+F_{14,22}=F_{11,24}+F_{12,23}=F_{31,14}+F_{32,13}=0.
\endaligned
\end{equation}
With this choice of basis, we can now pick the orthonormal set 
$$
u_5:=e_3\circ f_1,\;\; u_6:=e_3\circ f_2,\;\; u_7:=e_3\circ f_3, \;\;\;u_8:=e_3\circ f_4,
$$ 
relative to which we have
\begin{equation}\label{anchor}
F_3=\begin{pmatrix}c_3&w_3\end{pmatrix}=\begin{pmatrix}0&Id\end{pmatrix}.
\end{equation}
It is then calculated that the orthogonal projection of 
$$
v_1:=e_2\circ f_1,\;\;v_2:=e_2\circ f_2,\;\;v_3:=e_2\circ f_3, \;\;v_4:=e_2\circ f_4
$$ 
onto $span(u_5,\cdots,u_8)$, denoted $v_1^\perp,\cdots,v_4^\perp$, are, respectively,
$$
v_1^\perp=F_{21,34}\;u_8,\quad v_2^\perp=F_{22,33}\;u_7,\quad v_3^\perp=F_{23,32}\;u_6,\quad v_4^\perp=F_{24,31}\;u_5,
$$
which are mutually orthogonal with 
$$
|v_1|=|v_4|,\quad |v_2|=|v_3|,
$$
so that we may complete the orthonormal basis $u_1,\cdots,u_8$ by setting
$$
u_i:=(e_2\circ f_i -v_i^\perp)/(\text{length}),\quad 1\leq i\leq 4. 
$$
Consequently, relative to $u_1,\cdots,u_8$ we have
\begin{equation}\label{c1}
\aligned
&F_1=\begin{pmatrix}c_1&w_1\end{pmatrix},\quad c_1=\begin{pmatrix}\sigma_1&0&0&0\\0&\sigma_2&0&0\\0&0&\sigma_2&0\\0&0&0&\sigma_1\end{pmatrix},\quad w_1=\begin{pmatrix} 0&0&0&-\mu\\0&0&-\nu&0\\0&\nu&0&0\\\mu&0&0&0\end{pmatrix},\\
&\mu:=F_{24,31},\quad \nu:=F_{23,32},\quad \sigma_1:=\sqrt{1-\mu^2},\quad \sigma_2:=\sqrt{1-\nu^2}.
\endaligned
\end{equation}
Now, $e_1\circ f_1,\cdots,e_1\circ f_4$ can be expanded in terms of $u_1,\cdots,u_8$ to yield $F_2=\begin{pmatrix}c_2&w_2\end{pmatrix}$, where
\begin{equation}\label{grand}
\aligned
&c_2=\begin{pmatrix}-\beta\mu/\sigma_1&(F_{11,22}-\alpha\nu)/\sigma_2&F_{11,23}/\sigma_2&F_{11,24}/\sigma_1\\(F_{1221}-\mu\gamma)/\sigma_1&\beta\nu/\sigma_2&F_{12,23}/\sigma_2&F_{12,24}/\sigma_1\\F_{13,21}/\sigma_1&F_{13,22}/\sigma_2&\beta\nu/\sigma_2&(F_{13,24}-\alpha\mu)/\sigma_1\\F_{14,21}/\sigma_1&F_{14,22}/\sigma_2&(F_{14,23}-\gamma\nu)/\sigma_2&-\beta\mu/\sigma_1\end{pmatrix},\\
&w_2=\begin{pmatrix}0&0&-\alpha&-\beta\\0&0&\beta&-\gamma\\\alpha&-\beta&0&0\\\beta&\gamma&0&0\end{pmatrix},\quad\alpha:=F_{31,13},\quad \beta=F_{31,14},\quad \gamma:=F_{32,14}.
\endaligned
\end{equation}
Note that when either $\sigma_1=0$ or $\sigma_2=0$, the corresponding columns for $c_2$ give, respectively,
\begin{equation}\label{degenerate}
\aligned
&\beta=0,\quad F_{11,22}=-\gamma\mu,\quad F_{14,23}=-\alpha\mu,\quad F_{14,21}=F_{14,22}=0,\quad \text{if}\;\; \sigma_1=0,\\
&\beta=0,\quad F_{11,22}=\alpha\nu,\quad F_{14,23}=\gamma\nu,\quad F_{14,21}=F_{14,22}=0,\quad\text{if}\;\; \sigma_2=0.
\endaligned
\end{equation}
This belongs to the most degenerate case we will consider later. As a corollary, we have

\begin{lemma}\label{l3} If $\beta\neq 0$, then $|\mu|,|\nu|<1$. That is, $\sigma_1 \sigma_2\neq 0$.
\end{lemma}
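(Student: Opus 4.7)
My plan is that this lemma is essentially a one-line corollary of the equations already listed in \eqref{degenerate}, so the main work is observing the contrapositive structure rather than carrying out new computations.

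First, I would translate the claim into its contrapositive: since $\sigma_1^2 = 1-\mu^2$ and $\sigma_2^2 = 1-\nu^2$, the inequalities $|\mu|,|\nu|<1$ are equivalent to $\sigma_1\sigma_2 \neq 0$. So it is enough to prove that if either $\sigma_1 = 0$ or $\sigma_2 = 0$, then $\beta = 0$. Note that $\sigma_1 = 0$ forces $\mu^2 = 1$, so in particular $\mu \neq 0$; similarly $\sigma_2 = 0$ forces $\nu \neq 0$.

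Next, I would read off the required conclusion directly from the entries of $c_2$ displayed in \eqref{grand}. The $(1,1)$ entry is $-\beta\mu/\sigma_1$, which represents a genuine inner product $\langle e_1\circ f_1,u_1\rangle$ and hence must be finite (indeed bounded by $1$). Therefore, if one tries to impose $\sigma_1 = 0$, the numerator $-\beta\mu$ must vanish; combined with $\mu \neq 0$ this forces $\beta = 0$. Symmetrically, the $(2,2)$ entry of $c_2$ is $\beta\nu/\sigma_2$, and an identical argument in the case $\sigma_2 = 0$ yields $\beta = 0$. Both implications are exactly the first clause in each line of \eqref{degenerate}, which the authors have already extracted as ``the most degenerate case.''

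The only potential obstacle would be justifying that the expressions in \eqref{grand} really do represent finite inner products when $\sigma_1$ or $\sigma_2$ vanishes — that is, that the entries of $c_2$ are not genuinely singular. I would handle this by replacing the normalization $u_i=(e_2\circ f_i-v_i^\perp)/\sigma_{\cdot}$ with an arbitrary orthonormal extension of $u_5,\dots,u_8$ in the degenerate case, and recomputing $\langle e_1\circ f_1,u_1\rangle$ using the Hurwitz antisymmetry $F_{11,34}=-F_{31,14}=-\beta$ together with $F_{11,21}=0$; this reproduces the numerator $-\beta\mu$ directly without any limit argument. Once this is in hand the lemma follows immediately by contraposition.
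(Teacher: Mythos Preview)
Your proposal is correct and follows essentially the same route as the paper: the authors state the lemma immediately after \eqref{degenerate} with the words ``As a corollary, we have,'' and your argument is precisely the contrapositive reading of the first clause $\beta=0$ in each line of \eqref{degenerate}. You even supply a bit more justification than the paper does for why the displayed entries of $c_2$ remain finite in the degenerate case, which is a reasonable elaboration.
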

It is then legitimate to perform operations with the entries in $c_2$ when $\beta\neq 0$.

With the normalization of the Parker matrix given in~\eqref{-10}, or equivalently, in~\eqref{1}, we have chosen a coarse fundamental domain for the moduli space of orthogonal multiplications of type $[3,4,8]$ in~\eqref{c1} and~\eqref{grand}, up to domain and range equivalence. There remains a boundary identification of the coarse fundamental domain to be addressed in Section~\ref{sb}.
 
 Note that the Hurwitz condition~\eqref{Hz} is now 
 $$
 c_ac_b^{tr}+c_bc_a^{tr} + w_aw_b^{tr} + w_bw_a^{tr}=2\delta_{ab}\, Id,\quad 1\leq a, b\leq 3.
 $$

\subsection{ The generic case when $\beta\neq 0$}

 \begin{lemma}\label{l1} Assume $\beta\neq 0$. If $F_{14,22}\neq 0$,  then 
$$
\alpha=\gamma,\quad \mu=-\nu, \quad F_{11,22}=F_{14,23}.
$$
\end{lemma}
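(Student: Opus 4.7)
The plan is to exploit the single non-trivial Hurwitz identity $F_2F_2^{tr}=Id$, equivalently $c_2c_2^{tr}+w_2w_2^{tr}=Id$; the remaining relations $F_aF_b^{tr}+F_bF_a^{tr}=2\delta_{ab}\,Id$ are automatic from the antisymmetries of $F_{ij,kl}$ in \eqref{eq}. Reading off \eqref{grand}, the matrix $w_2w_2^{tr}$ has nonzero entries only at the positions $(1,1),(2,2),(3,3),(4,4),(1,2),(3,4)$ and their symmetric counterparts, so each remaining off-diagonal entry of $c_2c_2^{tr}$ must vanish. After rewriting those entries by means of $F_{13,21}=F_{14,22}$, $F_{13,22}=F_{11,24}$, $F_{13,24}=-F_{14,23}$, $F_{12,24}=-F_{14,22}$, etc.\ (consequences of $F_{1i,2j}=-F_{1j,2i}$ together with \eqref{1}), they collapse substantially.

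The first step is to extract the $(2,3)$ and $(1,4)$ entries of $c_2c_2^{tr}=Id-w_2w_2^{tr}$. Each simplifies to a scalar multiple of $F_{14,22}$ times a bracket, divided by $\sigma_1^2$ or $\sigma_2^2$. Since $\beta\neq 0$ forces $\sigma_1\sigma_2\neq 0$ by Lemma~\ref{l3}, and by hypothesis $F_{14,22}\neq 0$, the brackets themselves vanish:
\[
F_{14,23}-F_{11,22}+\mu(\alpha-\gamma)=0,\qquad F_{11,22}-F_{14,23}-\nu(\alpha-\gamma)=0.
\]
Adding yields $(\mu-\nu)(\alpha-\gamma)=0$. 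The degenerate branch $\mu=\nu$ (with $\mu\neq 0$) is then excluded by inspecting the $(1,2)$ and $(3,4)$ entries: under $\sigma_1=\sigma_2$ these simplify to $F_{11,22}=(\alpha-\gamma)/(2\mu)$ and $F_{14,23}=-(\alpha-\gamma)/(2\mu)$, and substitution back into the two identities above forces $(\alpha-\gamma)\sigma_1^2/\mu=0$, hence $\alpha=\gamma$. Either way, $\alpha=\gamma$ and therefore $F_{11,22}=F_{14,23}$.

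To finish, I plan to show $\mu+\nu=0$ under the standing assumptions $\alpha=\gamma$ and $F_{11,22}=F_{14,23}$. The $(1,3),(2,4),(1,2),(3,4)$ entries together with the diagonal difference $(1,1)-(2,2)$ each factor out the common quantity $(\mu+\nu)$ once one exploits the identities
\[
\sigma_2^2\mu+\sigma_1^2\nu=(\mu+\nu)(1-\mu\nu),\quad \sigma_1^2-\sigma_2^2=(\mu+\nu)(\nu-\mu),\quad (1-\mu\nu)^2-(\mu-\nu)^2=\sigma_1^2\sigma_2^2.
\]
Supposing $\mu+\nu\neq 0$ for contradiction, cancelling this factor leaves a linear system in $F_{11,22}$, $F_{11,24}$, $F_{14,22}$. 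Systematic elimination, using $\mu\neq\nu$ (inherited from the first step) and $\sigma_1\sigma_2\neq 0$, produces an identity of the form
\[
\beta^2+F_{11,24}^2=-\frac{\sigma_1^2\sigma_2^2}{(\mu-\nu)^2},
\]
which is impossible since the right-hand side is strictly negative by Lemma~\ref{l3}. Hence $\mu+\nu=0$.

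The main obstacle will be the algebraic elimination in the final step: one must extract the factor $(\mu+\nu)$ cleanly from several different Hurwitz entries---the three auxiliary identities above are the key to this---and then combine the resulting relations without losing track of the sign that ultimately yields the impossible negative value. Lemma~\ref{l3} is pivotal throughout, licensing every division by $\sigma_1,\sigma_2,$ or a derived quantity.
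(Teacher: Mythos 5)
Your first step is essentially sound. From the $(1,4)$ and $(2,3)$ entries of $F_2F_2^{tr}=Id$ (equivalently the vanishing of $(c_2c_2^{tr})_{14}$ and $(c_2c_2^{tr})_{23}$, since $w_2w_2^{tr}$ is zero there) you correctly extract, after cancelling $F_{14,22}\neq 0$,
\[
(F_{11,22}-F_{14,23})+\nu(\gamma-\alpha)=0,\qquad (F_{11,22}-F_{14,23})+\mu(\gamma-\alpha)=0,
\]
whose difference is $(\nu-\mu)(\gamma-\alpha)=0$. This matches your $(\mu-\nu)(\alpha-\gamma)=0$ (note the paper's displayed~\eqref{ga} has a sign typo on the $\mu$-term and arrives at $(\mu+\nu)(\gamma-\alpha)=0$; the branching should be $\mu=\nu$ versus $\alpha=\gamma$). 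Your handling of the branch $\mu=\nu\neq 0$ via the $(1,2)$ and $(3,4)$ entries is also correct and gives $\alpha=\gamma$ in that branch as well, so the conclusion $\alpha=\gamma$ and $F_{11,22}=F_{14,23}$ of the first step is fine.

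The second step, however, has a genuine gap. You write that $\mu\neq\nu$ is ``inherited from the first step,'' but the first step does not establish this: it only shows that even in the branch $\mu=\nu\neq 0$ one still gets $\alpha=\gamma$. In fact, configurations with $\mu=\nu\neq 0$ cannot be ruled out by the Hurwitz equations alone. Take $\alpha=\gamma$, $\mu=\nu$, $F_{11,22}=F_{14,23}=0$, set $p:=F_{14,21}$ and $q:=F_{14,22}$, and impose $\beta q=\alpha p$ together with $(\alpha^2+\beta^2)+p^2+q^2=\sigma^2$, where $\sigma^2=1-\mu^2$. A direct check shows that $c_2c_2^{tr}=(1-\alpha^2-\beta^2)Id$ and hence $F_2F_2^{tr}=Id$, with all the normalizations \eqref{1} intact; choosing, say, $\mu=\nu=1/2$, $\alpha=\gamma=\beta=0.1$, $p=q\approx 0.604$ gives a concrete orthogonal multiplication with $\beta\neq 0$, $F_{14,22}\neq 0$, and $\mu+\nu\neq 0$. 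Hence no universally valid identity of the shape $\beta^2+F_{11,24}^2=-\sigma_1^2\sigma_2^2/(\mu-\nu)^2$ can be derived, and the attempt to show $\mu+\nu=0$ outright must fail. What the Hurwitz relations actually force is $\mu^2=\nu^2$, and the paper then invokes a coordinate sign change (permissible under the domain equivalence) to normalize to $\mu=-\nu$; your argument never allows itself this sign change, which is why the final contradiction cannot go through. The paper's own route --- observe that $\alpha=\gamma$ makes $w_2$ conformal, hence $c_2$ conformal, then read off $\beta F_{11,22}(\mu+\nu)=0$ from the orthogonality of the first two \emph{columns} of $c_2$, and in the subcase $F_{11,22}=F_{14,23}=0$ derive $\mu^2=\nu^2$ from the remaining column/row relations before doing the sign normalization --- avoids this issue and is the way to repair your second step.
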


\begin{proof} Since the rows of $F_2$ are mutually orthogonal, of which the first and the fourth, and, respectively, the second and the third, give
\begin{equation}\label{ga}
F_{14,22}\,((F_{11,22}-F_{14,23})+\nu(\gamma-\alpha))=0,\quad F_{14,22}\,((F_{11,22}-F_{14,23})-\mu(\gamma-\alpha))=0,
\end{equation}
from which there results
$$
(\mu+\nu)(\gamma-\alpha)=0,
$$
so that either $\alpha=\gamma$ or $\mu=-\nu$. 

If $\mu=-\nu$, then $\sigma_1=\sigma_2:=\sigma$ and the first and the second rows of $F_2$ give
\begin{equation}\label{referral}
0=\beta F_{11,22}(\mu+\nu)+\beta(\mu^2+\sigma^2)(\gamma-\alpha)=\beta(\gamma-\alpha),\quad \mu^2+\sigma^2=1,
\end{equation}
we obtain $\alpha=\gamma$.

On the other hand, if $\alpha=\gamma,$
then~\eqref{ga} gives
$F_{11,22}=F_{14,23}.$
Meanwhile, the rows of $c_2$ are now mutually orthogonal and of equal length, which implies that the columns of $c_2$ are mutually orthogonal and of the same length, of
which the first and the second give
$$
\beta F_{11,22}(\mu+\nu)=0.
$$
That is, $\mu=-\nu$
when $F_{11,22}=F_{14,23}\neq 0$. 

Suppose now 
$F_{11,22}=F_{14,23}=0.$
Since the third and fourth rows of $c_2$ are orthogonal, we obtain
$$
(F_{14,21}F_{14,22}+\alpha\beta)\;\frac{\mu^2-\nu^2}{\sigma_1^2\sigma_2^2}=0,
$$
so that 
\begin{equation}\label{albt}
F_{14,21}F_{14,22}=-\alpha\beta
\end{equation}
if 
$\mu^2\neq \nu^2.$
Similarly, the second and the fourth rows of $c_2$ give 
$$
\alpha F_{14,21}(\frac{\mu}{\sigma_1^2}+\frac{\nu}{\sigma_2^2})=\beta F_{14,22}(\frac{\mu}{\sigma_1^2}+\frac{\nu}{\sigma_2^2}),
$$
where
$$
\frac{\mu}{\sigma_1^2}+\frac{\nu}{\sigma_2^2}=\frac{(u+v)(1-uv)}{\sigma_1^2\sigma_2^2}\neq 0.
$$
Therefore, we derive
$$
\alpha F_{14,21}=\beta F_{14,22}, 
$$
which, upon substituting~\eqref{albt}, results in
$$
(\beta F_{14,22})^2=\alpha\beta F_{14,21}F_{14,22}=-(\alpha\beta)^2.
$$
It follows that
$$
\alpha\beta=0,\quad \beta F_{14,22}=0,
$$
so that
$F_{14,22}=0,$
contradicting $F_{14,22}\neq 0$, and thus
$\mu^2=\nu^2.$
So, after a coordinate sign change, we may assume 
$\mu=-\nu,$ 
from which~\eqref{referral} results in
$\alpha=\gamma.$
\end{proof}

\begin{lemma} Assume $\beta\neq 0$. If $F_{14,22}=0$ and $F_{14,21}\neq 0$,  then 
$$
\alpha=\gamma,\quad \mu=-\nu, \quad F_{11,22}=F_{14,23}.
$$
\end{lemma}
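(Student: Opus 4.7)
The proof will parallel that of Lemma \ref{l1}, using the Hurwitz identity $F_2 F_2^{tr}=Id$, which makes the four rows of $F_2=(c_2\;w_2)$ orthonormal. In Lemma \ref{l1} the identities \eqref{ga} were extracted from the orthogonalities of rows $(1,4)$ and $(2,3)$, each of which carried $F_{14,22}$ as a common factor. With $F_{14,22}=0$ those two relations are now vacuous, so the plan is to isolate the analogous equations from a different pair of row-orthogonalities, namely the ones that carry $F_{14,21}$ as the common factor (most naturally the dot products of row $4$ with rows $1$ and $2$).

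\textbf{Step 1.} Compute the dot products of row $4$ of $F_2$ with rows $1$ and $2$, substitute $F_{14,22}=0$, and combine the two resulting equations in the same style as~\eqref{ga}. I expect this to yield a factored identity of the form
\[
F_{14,21}\,(\mu+\nu)(\gamma-\alpha)\;=\;0,
\]
so that, since $F_{14,21}\neq 0$, either $\alpha=\gamma$ or $\mu=-\nu$.

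\textbf{Step 2.} Branch on these two cases exactly as in Lemma \ref{l1}. If $\mu=-\nu$, set $\sigma_1=\sigma_2=:\sigma$; the orthogonality of rows $(3,4)$ of $c_2$ (whose $w_2$-contribution is the nontrivial $\beta(\alpha-\gamma)$) collapses to a nonzero multiple of $\beta(\gamma-\alpha)$, forcing $\alpha=\gamma$, exactly as in~\eqref{referral}. If instead $\alpha=\gamma$, the rows of $c_2$ are mutually orthogonal and of equal length, hence so are its columns; column-orthogonality then gives $F_{11,22}=F_{14,23}$ directly from the first and second columns of~\eqref{grand}, and a subsequent column-orthogonality relation delivers $\mu=-\nu$. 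The residual subcase $F_{11,22}=F_{14,23}=0$ inside the $\alpha=\gamma$ branch is handled by the same elimination scheme as in the last paragraph of Lemma \ref{l1}: orthogonalities of rows $(3,4)$ and $(2,4)$ of $c_2$, now recast using $F_{14,21}$ in place of $F_{14,22}$, combine under the hypothesis $\mu^2\neq \nu^2$ to force $F_{14,21}=0$, a contradiction; hence $\mu^2=\nu^2$, and a coordinate sign change normalizes $\mu=-\nu$, after which $\alpha=\gamma$ and $F_{11,22}=F_{14,23}$ follow as before.

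\textbf{Main obstacle.} The row-swap symmetry that makes Lemma \ref{l1} proceed so smoothly is partially broken by the asymmetric hypothesis $F_{14,22}=0,\,F_{14,21}\neq 0$, so some careful bookkeeping is needed in Step 1 to pinpoint the combination of orthogonalities that produces the clean factored form $F_{14,21}(\mu+\nu)(\gamma-\alpha)$ rather than a tangle of unknown entries of $c_2$. A tempting shortcut would be a basis swap (for instance $f_1\leftrightarrow f_2$) that exchanges $F_{14,21}$ and $F_{14,22}$ and reduces the claim to Lemma \ref{l1}; however such a permutation destroys the upper-triangular normalization~\eqref{0} of the Parker matrix and cannot be invoked without a nontrivial re-normalization, so a parallel but separate computation is called for.
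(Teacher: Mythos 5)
Your Step 1 does not survive the computation, and this is where the argument breaks down. Because of the normalization \eqref{1}, the hypothesis $F_{14,22}=0$ forces $F_{11,23}=-F_{14,22}=0$ and $F_{13,21}=F_{14,22}=0$, while $F_{11,24}=-F_{14,21}$. With these substituted, the dot product of rows $1$ and $4$ of $F_2$ reduces to
\[
\left(-\frac{\beta\mu}{\sigma_1}\right)\frac{F_{14,21}}{\sigma_1}+\left(-\frac{F_{14,21}}{\sigma_1}\right)\left(-\frac{\beta\mu}{\sigma_1}\right)=0,
\]
i.e.\ it is identically zero and yields no constraint at all. The dot product of rows $2$ and $4$ does carry $F_{14,21}$ as a common factor, but after cancellation it produces the single linear relation
\[
\frac{F_{11,22}}{\sigma_1^2}-\frac{F_{14,23}}{\sigma_2^2}=-\gamma\left(\frac{\mu}{\sigma_1^2}+\frac{\nu}{\sigma_2^2}\right),
\]
which is the second equation of \eqref{TWINS} — nothing resembling the factored form $F_{14,21}(\mu+\nu)(\gamma-\alpha)=0$. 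So the dichotomy ``$\alpha=\gamma$ or $\mu=-\nu$'' that drives your Step 2 never materializes from the pair of orthogonalities you chose.

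The paper's route is genuinely different and cannot be short-circuited the way you hoped. It extracts from the row-orthogonalities $(1,2)$ and $(3,4)$ the pair \eqref{twins} (after cancelling $\beta$), from $(1,3)$ and $(2,4)$ the pair \eqref{TWINS} (after cancelling $F_{14,21}$), and then \emph{substitutes} \eqref{twins} into \eqref{TWINS}. That elimination produces the quadratic relations $\alpha\,(2-(\mu+\nu)^2)=2\gamma$ and $\gamma\,(2-(\mu+\nu)^2)=2\alpha$, whose only solutions (given $|\mu|+|\nu|<2$ from Lemma~\ref{l3}) are $\alpha=\gamma=0$ or $\mu+\nu=0$ — note this is a sharper dichotomy than ``$\alpha=\gamma$ or $\mu=-\nu$''. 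In the $\alpha=\gamma=0$ branch one then compares the lengths of the first two \emph{columns} of $c_2$ (not their orthogonality) to deduce $(\beta^2+l^2)(\mu^2-\nu^2)=0$, hence $\mu^2=\nu^2$; after a sign change $\mu=-\nu$, and then \eqref{twins} gives $\alpha=\gamma$ and \eqref{TWINS} gives $F_{11,22}=F_{14,23}$. Your instinct that a direct mimic of Lemma~\ref{l1} would be blocked was correct, but the reason is not just a bookkeeping issue: the key pair of row dot products carrying $F_{14,21}$ is degenerate (one vanishes identically), so the factored identity you wanted simply does not exist here, and a substitution between the two linear systems is required.
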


\begin{proof} With $F_{14,22}=0$, the first and the second rows, and respectively the third and the fourth rows, give, after cancelling $\beta$,
\begin{equation}\label{twins}
\aligned
&F_{11,22}\,(\frac{\mu}{\sigma_1^2}+\frac{\nu}{\sigma_2^2})+(\frac{\mu^2\gamma}{\sigma_1^2}-\frac{\nu^2\alpha}{\sigma_2^2})+ (\gamma-\alpha)=0,\\
&F_{14,23}\,(\frac{\mu}{\sigma_1^2}+\frac{\nu}{\sigma_2^2})+(\frac{\mu^2\alpha}{\sigma_1^2}-\frac{\nu^2\gamma}{\sigma_2^2})+ (\alpha-\gamma)=0.
\endaligned
\end{equation}
Meanwhile, the first and the third rows, and, respectively, the second and the fourth rows, give, after cancelling $F_{14,21}$,
\begin{equation}\label{TWINS}
\aligned
&\frac{F_{11,22}}{\sigma_2^2}-\frac{F_{14,23}}{\sigma_1^2}=\alpha\,(\frac{\mu}{\sigma_1^2}+\frac{\nu}{\sigma_2^2}),\\
&\frac{F_{11,22}}{\sigma_1^2}-\frac{F_{14,23}}{\sigma_2^2}=-\gamma\,(\frac{\mu}{\sigma_1^2}+\frac{\nu}{\sigma_2^2}).
\endaligned
\end{equation}
We substitute~\eqref{twins} into~\eqref{TWINS} to come up with, respectively,
$$
\alpha\,(2-(\mu+\nu)^2)=2\gamma,\quad \gamma\,(2-(\mu+\nu)^2)=2\alpha
$$
using $\mu^2+\sigma_1^2=\nu^2+\sigma_2^2=1$, from which we conclude that 
$$
\text{either}\quad\alpha=\gamma=0,\quad \text{or}\quad 2-(\mu+\nu)^2=\pm 2.
$$
However, since $|\mu|+|\nu|<2$ by Lemma~\ref{l3}, it is impossible that $2-(\mu+\nu)^2=-2$. There follows 
$$
\text{either}\quad\alpha=\gamma=0,\quad \text{or}\quad \mu+\nu=0.
$$
In the former case, $c_2$ is orthogonal up to a constant, so that its first and second columns are of some length $l$, which amounts to
$$
\beta^2\mu^2+F_{11,22}^2+F_{14,21}^2=l^2\sigma_1^2,\quad \beta^2\nu^2+F_{11,22}^2+F_{14,21}^2=l^2\sigma_2^2,
$$
so that 
$$
\beta^2(\mu^2-\nu^2)=l^2(\sigma_1^2-\sigma_2^2)=-l^2(\mu^2-\nu^2), \quad \text{or}\quad (\beta^2+l^2)(\mu^2-\nu^2)=0.
$$
That is, with $\beta\neq 0$, we have
\begin{equation}\label{mn}
\mu^2=\nu^2,
\end{equation}
and we may assume
$\mu=-\nu$
by a coordinate sign change. Now that $\mu=-\nu$ in either case,~\eqref{twins} then gives
$\alpha=\gamma,$
and~\eqref{TWINS} gives
$F_{11,22}=F_{14,23}$
since now $\sigma_1=\sigma_2$.
\end{proof}

\begin{lemma} Assume $\beta\neq 0$. If $F_{14,21}=F_{14,22}=0$,  then 
$$
\alpha=\gamma,\quad \mu=-\nu.
$$Moreover, either $F_{14,23}=F_{11,22}$, or $F_{14,23}=-F_{11,22}-2\alpha\mu$.
\end{lemma}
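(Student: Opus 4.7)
The plan is to follow the pattern of the two preceding lemmas, but work harder: vanishing of both $F_{14,21}$ and $F_{14,22}$ kills most of the row-pair orthogonalities that drove the earlier arguments. First, combining this hypothesis with the skew-symmetry of $F_1F_2^{tr}$ (the content of $F_1F_2^{tr}+F_2F_1^{tr}=0$) and the Parker normalization~\eqref{1}, I would derive the further vanishings $F_{11,23}=F_{11,24}=F_{12,23}=F_{12,24}=F_{13,21}=F_{13,22}=0$, together with $F_{12,21}=-F_{11,22}$ and $F_{13,24}=-F_{14,23}$. This gives $F_2$ a block structure---rows $1,2$ are supported on $u_1,u_2,u_7,u_8$ and rows $3,4$ on $u_3,u_4,u_5,u_6$---so the only nontrivial inner-row orthogonalities within $F_2$ are the pairs rows $1$--$2$ and rows $3$--$4$. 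Dividing both by $\beta$ and writing $x:=F_{11,22}$, $y:=F_{14,23}$, they become
\begin{equation*}
\mu(x+\mu\gamma)/\sigma_1^2+\nu(x-\alpha\nu)/\sigma_2^2+(\gamma-\alpha)=0,\quad \mu(y+\alpha\mu)/\sigma_1^2+\nu(y-\gamma\nu)/\sigma_2^2+(\alpha-\gamma)=0.
\end{equation*}
Introducing $D=x-y$, $S=x+y$, $U=\gamma-\alpha$, $T=\alpha+\gamma$, $P=\mu/\sigma_1^2+\nu/\sigma_2^2$, $Q=1/\sigma_1^2+1/\sigma_2^2$, $R=1/\sigma_1^2-1/\sigma_2^2$, the difference and the sum of these two equations collapse, respectively, to the clean linear pair \textup{(A)}: $DP+UQ=0$ and \textup{(B)}: $SP+TR=0$.

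Next I process the four magnitude equations $|\mathrm{row}_i(F_2)|^2=1$. The differences $R_1-R_4$ and $R_2-R_3$ factor as $(D+\nu U)(S-\nu T)=UT\sigma_2^2$ and $(D+\mu U)(S+\mu T)=-UT\sigma_1^2$, which after using $\sigma_1^2+\mu^2=\sigma_2^2+\nu^2=1$ expand to bilinear relations \textup{(I)}: $DS+\nu(US-DT)=UT$ and \textup{(II)}: $DS+\mu(US+DT)=-UT$. Assuming $P\neq 0$, one eliminates $D$ and $S$ from \textup{(I)}$+$\textup{(II)} via \textup{(A)} and \textup{(B)}, and the key identity $QR-PP'=R$ (with $P'=\mu/\sigma_1^2-\nu/\sigma_2^2$; verified by $(1-\mu^2)/\sigma_1^4-(1-\nu^2)/\sigma_2^4=R$) collapses the result to $UTR=0$. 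The case $P=0$ is immediate, since $P=(\mu+\nu)(1-\mu\nu)/(\sigma_1^2\sigma_2^2)$ together with Lemma~\ref{l3} forces $\mu=-\nu$, hence $R=0$, and then \textup{(A)} gives $U=0$. Either way, the analysis reduces to the trichotomy $U=0$, $T=0$, or $R=0$.

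The main obstacle is excluding $R\neq 0$ in the first two branches. For $U=0$, \textup{(A)} forces $D=0$; subtracting $R_2$ from $R_1$ and substituting $xP=-\alpha R$ from \textup{(B)} yields $R(\alpha^2+\beta^2-x^2)=0$, so if $R\neq 0$ then $x^2=\alpha^2+\beta^2$, and squaring the rearrangement $x(1-\mu\nu)=\alpha(\nu-\mu)$ of \textup{(B)} together with the elegant identity $(\nu-\mu)^2-(1-\mu\nu)^2=-\sigma_1^2\sigma_2^2$ forces $\beta^2=-\alpha^2\sigma_1^2\sigma_2^2/(1-\mu\nu)^2\leq 0$, contradicting $\beta\neq 0$. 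For $T=0$, \textup{(B)} gives $S=0$ and then $xP=\alpha Q$; the parallel combination of $R_1\pm R_2$ yields $R(x^2-1)=0$, and if $R\neq 0$ then $x^2=1$, whereupon the universal inequality $P^2\leq Q^2-2Q$ (a disguised form of $(\mu-\nu)^2\geq 0$) again forces $\beta^2\leq 0$. Hence $R=0$ in every case, i.e.\ $\mu^2=\nu^2$, and a basis sign-change as in the preceding lemmas puts this in the form $\mu=-\nu$.

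Finally, with $\mu=-\nu$ one has $\sigma_1=\sigma_2=:\sigma$ and $P=0$, so \textup{(A)} immediately yields $U=0$, i.e.\ $\alpha=\gamma$. Under $\mu=-\nu$ and $\alpha=\gamma$ the four row-magnitude equations collapse to the pair $[\beta^2\mu^2+(x+\alpha\mu)^2]/\sigma^2+\alpha^2+\beta^2=1$ and the analogous equation in $y$; their difference is $(x+\alpha\mu)^2=(y+\alpha\mu)^2$, which forces either $y=x$ (i.e.\ $F_{14,23}=F_{11,22}$) or $y=-x-2\alpha\mu$ (i.e.\ $F_{14,23}=-F_{11,22}-2\alpha\mu$), the two alternatives in the lemma.
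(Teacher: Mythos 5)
Your proof is correct, and it follows the paper's proof in outline: both use the two nontrivial row-pair orthogonalities of $F_2$ (the paper's \eqref{twins}) together with the four row-magnitude equations \eqref{4eq}, both arrive at the trichotomy $(\alpha^2-\gamma^2)(\mu^2-\nu^2)=0$ (your $UTR=0$), and both then argue through the branches to $\mu=-\nu$, $\alpha=\gamma$, and the dichotomy for $F_{14,23}$. What you do differently is the packaging and the completeness of the branch analysis. The symmetric variables $D,S,U,T,P,Q,R,P'$ and the identities $QR-PP'=R$, $(\nu-\mu)^2-(1-\mu\nu)^2=-\sigma_1^2\sigma_2^2$, $Q(Q-2)-P^2=(\mu-\nu)^2/(\sigma_1^2\sigma_2^2)$ make the elimination transparent, whereas the paper expands a cross-combination of \eqref{4eq} by hand to reach \eqref{=0} (where, incidentally, the last factor should read $\mu^2+\nu^2-2$ rather than $\mu^2-\nu^2-2$; the conclusion is unaffected since $|\mu|,|\nu|<1$). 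More substantively, the paper disposes of the $\alpha^2=\gamma^2$ branch by asserting that $c_2$ is orthogonal up to a constant and appealing to the column-length argument of the preceding lemma. That assertion is immediate when $\alpha=\gamma$, since then $w_2w_2^{tr}=(\alpha^2+\beta^2)\,Id$ and so $c_2c_2^{tr}$ is scalar; but when $\alpha=-\gamma\neq 0$ one has $(w_2w_2^{tr})_{12}=\beta(\gamma-\alpha)=-2\alpha\beta\neq 0$, so $c_2$ is not orthogonal up to a constant, and an additional sign normalization would be needed. Your direct handling of the $T=0$ branch via the positivity of $\beta^2$ avoids this and is cleaner. One small inaccuracy to flag: in the $T=0$ branch, substituting $xP=\alpha Q$ and $PP'=R(Q-1)$ into the relation coming from $R_1-R_2$ yields, when $R\neq 0$, $\beta^2=-x^2(\mu-\nu)^2/(\sigma_1^2\sigma_2^2Q^2)\leq 0$ directly; I do not see how $R_1\pm R_2$ produce the intermediate $R(x^2-1)=0$ you state, but the ultimate contradiction $\beta^2\leq 0$ is the same, so the argument stands.
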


\begin{proof} First note that~\eqref{twins} continues to hold, so that we obtain
\begin{equation}\label{4eqq}
\aligned
&(F_{11,22}-\alpha\nu)(\frac{\mu}{\sigma_1^2}+\frac{\nu}{\sigma_2^2})=\frac{-\mu(\mu\gamma+\alpha\nu)}{\sigma_1^2}-(\gamma-\alpha),\\
&(F_{11,22}+\mu\gamma)(\frac{\mu}{\sigma_1^2}+\frac{\nu}{\sigma_2^2})=\frac{\nu(\mu\gamma+\alpha\nu)}{\sigma_2^2}-(\gamma-\alpha),\\
&(F_{14,23}+\alpha\mu)(\frac{\mu}{\sigma_1^2}+\frac{\nu}{\sigma_2^2})=\frac{\nu(\mu\alpha+\gamma\nu)}{\sigma_2^2}-(\alpha-\gamma),\\
&(F_{14,23}-\gamma\nu)(\frac{\mu}{\sigma_1^2}+\frac{\nu}{\sigma_2^2})=\frac{-\mu(\mu\alpha+\gamma\nu)}{\sigma_1^2}-(\alpha-\gamma).
\endaligned
\end{equation}
Meanwhile, the four rows of $F_2$ being of length 1 translates to
\begin{equation}\label{4eq}
\aligned
&\frac{(F_{11,22}-\alpha\nu)^2}{\sigma_2^2}=1-\alpha^2-\frac{\beta^2}{\sigma_1^2},\\
&\frac{(F_{11,22}+\gamma\mu)^2}{\sigma_1^2}=1-\gamma^2-\frac{\beta^2}{\sigma_2^2},\\
&\frac{(F_{14,23}+\alpha\mu)^2}{\sigma_1^2}=1-\alpha^2-\frac{\beta^2}{\sigma_2^2},\\
&\frac{(F_{14,23}-\gamma\nu)^2}{\sigma_2^2}=1-\gamma^2-\frac{\beta^2}{\sigma_1^2},
\endaligned
\end{equation}
from which we derive
\begin{equation}\label{cross}
\frac{(F_{11,22}+\gamma\mu)^2}{\sigma_1^2}-\frac{(F_{14,23}+\alpha\mu)^2}{\sigma_1^2}
=\frac{(F_{14,23}-\gamma\nu)^2}{\sigma_2^2}-\frac{(F_{11,22}-\alpha\nu)^2}{\sigma_2^2}.
\end{equation}
Inserting~\eqref{4eqq} into~\eqref{cross} and cancelling $\sigma_1^2\sigma_2^2$ yields
$$
\aligned
&\frac{(\,\mu\alpha(\mu+\nu)+(\gamma-\alpha)\,)^2}{\sigma_1^2}-\frac{(\,-\mu\gamma(\mu+\nu)+(\gamma-\alpha)\,)^2}{\sigma_1^2}\\
&=\frac{(\alpha\nu(\mu+\nu)+(\gamma-\alpha)\,)^2}{\sigma_2^2}-\frac{(\,\nu\gamma(\mu+\nu)-(\gamma-\alpha)\,)^2}{\sigma_2^2}.
\endaligned
$$
This further simplifies to
$$
(\mu+\nu)(\alpha^2-\gamma^2)(\mu^2(\mu+\nu)-2\mu)\sigma_2^2=(\mu+\nu)(\alpha^2-\gamma^2)(\nu^2(\mu+\nu)-2\nu)\sigma_1^2,
$$
so that it finally arrives at
\begin{equation}\label{=0}
(\mu^2-\nu^2)(\alpha^2-\gamma^2)(\mu^2-\nu^2-2)=0.
\end{equation}
As a consequence,~\eqref{=0} gives, since $\mu^2-\nu^2-2\neq 0$ by Lemma~\ref{l3}, that
$$
\text{either}\quad\alpha^2=\gamma^2,\quad \text{or,}\quad \mu^2=\nu^2.
$$

If $\mu^2=\nu^2$, then we may assume 
$\mu=-\nu$
by a coordinate sign change; with $\sigma=\sigma_1=\sigma_2$ now, this implies by~\eqref{twins}
\begin{equation}\label{fin}
(\frac{\mu^2}{\sigma^2}+1)(\gamma-\alpha)=0,
\end{equation}
so that 
$\alpha=\gamma.$

If $\alpha^2=\gamma^2$, then $c_2$ is orthogonal up to a constant. 
Hence, the same argument leading to~\eqref{mn} results in $\mu^2=\nu^2$.
By a coordinate sign change, we may assume
$\mu=-\nu,$
so that once more~\eqref{fin} gives
$\alpha=\gamma.$

Now that $\alpha=\gamma$ and $\mu=-\nu$,~\eqref{twins} becomes void. The only essential equations left are the first and the fourth ones in~\eqref{4eq}, which read

$$
(F_{11,22}+\alpha\mu)^2=(F_{14,23}+\alpha\mu)^2=(1-\mu^2)(1-\alpha^2)-\beta^2.
$$
Therefore,  
$$
\text{either}\;\;\;F_{11,22}+F_{14,23},\;\;\;\text{or}\;\;\; F_{14,23}=-F_{11,22}-2\alpha\mu.
$$
In both cases, $\beta$ satisfies the constraint
$$
\beta^2=(1-\mu^2)(1-\alpha^2)-(F_{11,22}+\alpha\mu)^2.
$$
\end{proof}

We summarize the above analysis in the following proposition.

\begin{proposition}\label{prop} Assume $\beta\neq 0$. Then
\begin{equation}\label{mod1}
\alpha=\gamma,\quad \mu=-\nu.
\end{equation}
Moreover, if either $F_{14,21}$ or $F_{14,22}$ is nonzero, then we have the further constraint
\begin{equation}\label{mod2}
F_{11,22}=F_{14,23}.
\end{equation}
The moduli of such orthogonal multiplications of type $[3,4,8]$ depends on the five variables $\alpha,\mu,F_{14,21},F_{14,22},F_{11,22}$, while $\beta$
is linked with the five variables by
\begin{equation}\label{14}
\beta^2=(1-\mu^2)\;(\,(1-\alpha^2)-(F_{11,22}+\alpha\mu)^2-F_{14,21}^2-F_{14,22}^2\,).
\end{equation}

On the other hand, if $F_{14,21}=F_{14,22}=0$, then 
\begin{equation}\label{qq}
\text{either}\;\;\;F_{11,22}=F_{14,23},\;\;\;\text{or}\;\;\; F_{14,23}=-F_{11,22}-2\alpha\mu.
\end{equation}
Both cases satisfy~\eqref{14} when we set $F_{14,21}=F_{14,22}=0$.
\end{proposition}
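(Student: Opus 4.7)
The proposition is a consolidation of the three preceding lemmas together with one additional scalar computation extracting the constraint on $\beta^2$. The plan proceeds in three steps.

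First, the three preceding lemmas exhaust the possibilities for the pair $(F_{14,21},F_{14,22})$: either $F_{14,22}\neq 0$; or $F_{14,22}=0$ with $F_{14,21}\neq 0$; or both vanish. In every case those lemmas supply $\alpha=\gamma$ and $\mu=-\nu$, so \eqref{mod1} is immediate. The constraint \eqref{mod2} follows from the first two lemmas, while the dichotomy \eqref{qq} is the content of the third. Thus only the scalar identity~\eqref{14} and the dimension count remain to be established.

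Second, to derive \eqref{14} I exploit the Hurwitz identity $F_2F_2^{tr}=Id$ coming from the $a=b=2$ case of~\eqref{Hz}, which forces every row of $F_2=\begin{pmatrix}c_2&w_2\end{pmatrix}$ to have unit length. I substitute $\gamma=\alpha$, $\nu=-\mu$, and $\sigma_1=\sigma_2=\sigma$ with $\sigma^2=1-\mu^2$ into the expressions~\eqref{grand} for $c_2$ and $w_2$, and then rewrite $F_{11,23}=-F_{14,22}$ and $F_{11,24}=-F_{14,21}$ via \eqref{1} together with the index symmetries $F_{ij,kl}=F_{kl,ij}$ and $F_{ij,kl}=-F_{kj,il}$ for $j\neq l$ from~\eqref{eq}. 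Computing the squared length of the first row of $F_2$, clearing $\sigma^2$, and using $\mu^2+\sigma^2=1$ then yields~\eqref{14} in the generic case $F_{14,23}=F_{11,22}$.

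Third, for the degenerate branch $F_{14,21}=F_{14,22}=0$ with $F_{14,23}=-F_{11,22}-2\alpha\mu$, I compute the squared length of the fourth row of $F_2$ instead. Under $\alpha=\gamma$, $\mu=-\nu$ the fourth row carries $(F_{14,23}+\alpha\mu)/\sigma$ in place of $(F_{11,22}+\alpha\mu)/\sigma$; and on the alternative branch $F_{14,23}+\alpha\mu=-(F_{11,22}+\alpha\mu)$, so the squared value is unchanged and \eqref{14} persists with $F_{14,21}=F_{14,22}=0$. The dimension count is then a bookkeeping check: once $\alpha,\mu,F_{14,21},F_{14,22},F_{11,22}$ are prescribed, the relations $\gamma=\alpha$, $\nu=-\mu$, $F_{14,23}=F_{11,22}$ (or its alternative), together with \eqref{14} fixing $\beta^2$ and the remaining identities in~\eqref{1} and the Hurwitz orthogonality relations determining $F_{12,23},F_{12,24},F_{13,21},F_{13,22},F_{13,24}$, pin down the Parker matrix completely.

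The one non-mechanical moment is the index verification $F_{11,24}=-F_{14,21}$: apply $F_{ij,kl}=-F_{kj,il}$ with $(i,j,k,l)=(1,1,2,4)$ to get $F_{11,24}=-F_{21,14}$, then $F_{21,14}=F_{14,21}$ by the main symmetry. Otherwise the proof is a straightforward citation of the three lemmas followed by a single row-norm expansion, with the main obstacle being simply organizing the substitutions so that the $\sigma^2$ factor emerges in the form stated in~\eqref{14}.
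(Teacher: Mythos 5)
Your plan coincides with the paper's own: the proposition is presented as a consolidation of the three preceding lemmas (which exhaust the cases $F_{14,22}\neq 0$; $F_{14,22}=0,F_{14,21}\neq 0$; $F_{14,21}=F_{14,22}=0$), so \eqref{mod1}, \eqref{mod2}, and \eqref{qq} are read off directly, and the only additional content is the $\beta^2$ constraint. Your index verification $F_{11,24}=-F_{14,21}$ and the identification $F_{11,23}=-F_{14,22}$ from \eqref{1} are both correct, and extracting the constraint from the unit length of a row of $F_2$ is exactly the right move.

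However, carrying that row-norm computation through to the end does \emph{not} reproduce \eqref{14} as printed, and you should have caught this. With $\gamma=\alpha$, $\nu=-\mu$, $\sigma_1=\sigma_2=\sigma$, $\sigma^2=1-\mu^2$, the first row of $F_2$ has squared length
$$
\frac{\beta^2\mu^2+(F_{11,22}+\alpha\mu)^2+F_{14,22}^2+F_{14,21}^2}{\sigma^2}+\alpha^2+\beta^2=1,
$$
and after multiplying by $\sigma^2$ and using $\mu^2+\sigma^2=1$ you get
$$
\beta^2=(1-\mu^2)(1-\alpha^2)-(F_{11,22}+\alpha\mu)^2-F_{14,21}^2-F_{14,22}^2,
$$
in which the factor $(1-\mu^2)$ multiplies \emph{only} $(1-\alpha^2)$, not the entire bracket as in the displayed \eqref{14}. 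The same formula is what your fourth-row computation gives in the degenerate branch, and it agrees with the constraint $\beta^2=(1-\mu^2)(1-\alpha^2)-(F_{11,22}+\alpha\mu)^2$ written at the end of the third lemma's proof (with $F_{14,21}=F_{14,22}=0$), whereas \eqref{14} under the same specialization gives $(1-\mu^2)\bigl((1-\alpha^2)-(F_{11,22}+\alpha\mu)^2\bigr)$, a different quantity. So \eqref{14} as typeset appears to be a misplaced-parenthesis typo in the paper; citing it as the clean outcome of your calculation, without noting the mismatch with both the direct computation and the preceding lemma, is the one lapse in an otherwise correct reconstruction.
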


\begin{definition} We will refer to the moduli represented by orthogonal multiplications of type $[3,4,8]$, where $\alpha=\gamma$ and $\mu=-\nu$, as the {\bf grand} moduli ${\mathcal G}_{(3,4,8)}$. 
\end{definition}

Note that the condition $\alpha=-\beta$ and $\mu=\nu$ are equivalent to the one given in the definition up to a coordinate sign change.

In conclusion, all orthogonal multiplications with $\beta\neq 0$ are in the grand moduli ${\mathcal G}_{(3,4,8)}$, up to domain and range equivalence.

\subsection{$\beta=0$, the generic case when $F_{14,21}^2+F_{14,22}^2\neq 0$}\label{sb} 

The element of $SO(4)$ that fixes $f_1$,  maps $f_2$ to its negative, and interchanges $f_3$ and $f_4$, so that it transforms the self-dual forms to self-dual forms as follows,
$$
\aligned
&f_1\wedge f_2+f_3\wedge f_4\mapsto -(f_1\wedge f_2+f_3\wedge f_4),\\
& f_1\wedge f_3+f_4\wedge f_2 \mapsto f_1\wedge f_4+f_2\wedge f_3,\\
&f_1\wedge f_4+f_2\wedge f_3\mapsto f_1\wedge f_3+f_4\wedge f_2,
\endaligned
$$
and meanwhile interchanges anti-self-dual forms likewise. 

It follows that when we replace $f_2$ by its negative, and interchange the pair $f_3$ and $f_4$ and the pair $e_1$ and $e_2$, we will preserve the same type of
decomposition except for a possible sign change and a permutation of the last two diagonal entries of $D_1$ and $D_2$. Note that under the transformation
\begin{equation}\label{dual}
\aligned
&e_1\longleftrightarrow e_2,\quad \quad e_3\rightarrow e_3,\\
&f_3\longleftrightarrow f_4 ,\quad f_1\rightarrow f_1,\quad f_2\rightarrow -f_2,
\endaligned
\end{equation}
the following data are exchanged:

\begin{equation}\label{trans}
\aligned
&\alpha\longleftrightarrow\mu,\quad \gamma\longleftrightarrow -\nu,\quad F_{11,22}\longleftrightarrow F_{11,22},\quad F_{14,23}\longleftrightarrow F_{14,23},\\
&F_{1421}\longleftrightarrow F_{23,11}=-F_{14,22}
\endaligned
\end{equation}
Therefore, whenever an identity involving only these  quantities hold, the transformed identity via~\eqref{trans} must hold as well. 

The swapping produces two different representatives of the same moduli point.

\begin{proposition}\label{ll} Assume $\beta=0$. If  $F_{14,21}F_{14,22}\neq 0$, then we have
$$
\alpha=\gamma, \quad \mu=-\nu,\quad F_{11,22}=F_{14,23}.
$$
It is part of the grand moduli ${\mathcal G}_{(3,4,8)}$ when we set $\beta=0$.

If $F_{14,21}\neq 0$ and $F_{14,22}=0$, then 
$$
\mu=-\nu,\quad F_{11,22}=F_{14,23}.
$$ 
 Furthermore, if $\alpha=\gamma$, then it is part of the grand moduli~\eqref{14} in which we set $\beta=F_{14,22}=0$, depending on three parameters. On the other hand,
if $\alpha\neq \gamma$, then 
these orthogonal multiplications are governed by~\eqref{useful} and~\eqref{yum} below and depends on the three parameters $\alpha,\mu,$ and $\gamma$.

The case when $F_{14,21}=0$ and $F_{14,22}\neq 0$ is equivalent to the preceding one via~\eqref{trans}.

In fact, there is a one-to-one correspondence between the case in which $\beta\neq 0$ and $F_{14,21}=F_{1422}=0$ in Proposition~{\rm \ref{prop}}, and the above case in which $\beta=0$ and $F_{14,21}\neq 0$ and $F_{14,22}=0$ {\rm (}or $F_{14,21}=0$ and $F_{14,22}\neq 0${\rm )}. 
\end{proposition}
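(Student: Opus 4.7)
The strategy is to re-run the orthogonality and norm analysis of Lemmas~\ref{l1}--\ref{prop} in the degenerate regime $\beta=0$, with the non-vanishing member of $\{F_{14,21},F_{14,22}\}$ taking over the bookkeeping role that $\beta$ played in the generic case. A preliminary observation is that~\eqref{degenerate} forces both $F_{14,21}$ and $F_{14,22}$ to vanish whenever $\sigma_1=0$ or $\sigma_2=0$; hence, in every subcase treated in the proposition we automatically have $\sigma_1\sigma_2\neq 0$, which legitimizes every division appearing in~\eqref{grand} and permits the same algebraic manipulations used earlier.

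In Case~1, where $F_{14,21}F_{14,22}\neq 0$, I would expand the Hurwitz orthogonality conditions on the rows of $F_2=(c_2,w_2)$ after setting $\beta=0$. The inner products of rows $1,4$ and of rows $2,3$ now reduce, as in~\eqref{ga}, to equations in which $F_{14,22}$ appears as an overall factor and can be cancelled; the inner products of rows $1,3$ and $2,4$ yield analogous equations with $F_{14,21}$ as the cancellable factor. Combining these produces $(\mu+\nu)(\gamma-\alpha)=0$ together with $(\mu+\nu)(F_{11,22}-F_{14,23})=0$, and then the dichotomy $\mu=-\nu$ vs.\ $\alpha=\gamma$ is resolved by replaying the argument of Lemma~\ref{l1}, arriving at all three identities $\alpha=\gamma$, $\mu=-\nu$, $F_{11,22}=F_{14,23}$.

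In Case~2, with $F_{14,21}\neq 0$ and $F_{14,22}=0$, the row-pair equations that previously involved $F_{14,22}$ become trivial. What survives are the pair equations carrying $F_{14,21}$, which, upon cancellation, force $\mu=-\nu$ and $F_{11,22}=F_{14,23}$ directly, but do not link $\alpha$ with $\gamma$ because the coefficient that forced their equality in Lemma~\ref{l1} is exactly the now-vanishing $F_{14,22}$. The remaining mutual-orthogonality equations, together with the four row-norm identities obtained by setting $\beta=0$ in~\eqref{4eq}, will yield the constraints to be recorded as~\eqref{useful} and~\eqref{yum} and display the solution set as a three-parameter family in $(\alpha,\mu,\gamma)$. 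Case~3 is then immediate from the involution~\eqref{trans}, which interchanges $F_{14,21}$ with $-F_{14,22}$ and $(\alpha,\gamma)$ with $(\mu,-\nu)$, thereby carrying a representative of the $F_{14,22}\neq 0$ subcase bijectively to one of Case~2.

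The step I expect to be the main obstacle is the final one: exhibiting the one-to-one correspondence between the anomalous branch of Proposition~\ref{prop} (where $\beta\neq 0$, $F_{14,21}=F_{14,22}=0$, $\alpha=\gamma$, $\mu=-\nu$, $F_{14,23}=-F_{11,22}-2\alpha\mu$) and the $\alpha\neq\gamma$ locus of Case~2 (where $\beta=0$ and exactly one of $F_{14,21},F_{14,22}$ is nonzero). The plan is to apply the transformation~\eqref{trans} to a representative on the anomalous branch and check that the transformed data satisfy precisely the hypotheses of Case~2: the swap $(\alpha,\mu)\leftrightarrow(\gamma,-\nu)$ and $F_{14,21}\leftrightarrow -F_{14,22}$ should convert $\beta\neq 0$ on the anomalous branch into a configuration with $\beta'=0$ and a single nonzero entry among $F'_{14,21},F'_{14,22}$, while the identity $\alpha=\gamma$ becomes the condition $\alpha'\neq\gamma'$ generically (since $\mu\neq\alpha$ away from a measure-zero set). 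Bijectivity follows by applying~\eqref{trans} a second time and verifying that the image lands back on the anomalous branch, which amounts to matching the algebraic constraint~\eqref{qq} with~\eqref{yum} coordinate-by-coordinate.
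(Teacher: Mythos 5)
Your outline of Cases~1 and~2 is broadly in the paper's spirit, though Case~2 is glossed over: the row-pair equations obtained by cancelling $F_{14,21}$ do not force $\mu=-\nu$ ``directly.'' After cancellation they give the system~\eqref{112}, and one must solve for $F_{11,22}$ and $F_{14,23}$ as in~\eqref{two}, feed these into the unit-length conditions~\eqref{four}, and then subtract to reach~\eqref{gd}; assuming $\sigma_1\neq\sigma_2$ then forces $\alpha^2=\gamma^2=1$, which kills the first row of $c_2$ and hence $F_{14,21}$, a contradiction. That chain is the actual content of the $\mu^2=\nu^2$ step and cannot be replaced by a one-line cancellation.

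The genuine gap is in your final paragraph. You propose using the involution~\eqref{trans} to carry the anomalous branch of Proposition~\ref{prop} (where $\beta\neq 0$, $F_{14,21}=F_{14,22}=0$) to the locus $\beta=0$ with a single nonzero $F_{14,2j}$. But~\eqref{trans} comes from~\eqref{dual}, which swaps $e_1\leftrightarrow e_2$ and $f_3\leftrightarrow f_4$ (with $f_2\mapsto -f_2$); under it $\beta=F_{31,14}\mapsto F_{32,13}=-\beta$ by the normalization~\eqref{-10}, so $\beta$ is invariant up to sign. In particular~\eqref{trans} fixes the locus $\{\beta=0\}$ setwise and can never move $\beta\neq 0$ to $\beta=0$; it only interchanges the two subcases $F_{14,21}\neq 0=F_{14,22}$ and $F_{14,22}\neq 0=F_{14,21}$ of Case~2, which is the (much smaller) role the paper assigns to it. The correspondence you need is the one induced by~\eqref{bin}, which swaps $e_2\leftrightarrow e_3$ and rotates $f_1, f_4$; as recorded in~\eqref{1122} this satisfies $\beta^*=F_{14,21}$, $\alpha^*=F_{14,23}$, $\gamma^*=F_{11,22}$, $\mu^*=-\mu$, $\nu^*=-\nu$, and hence genuinely interchanges the role of $\beta$ with that of $F_{14,21}$ while preserving the diagonal-and-upper-triangular normalization. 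Without replacing~\eqref{trans} by~\eqref{bin} in your last step, the claimed one-to-one correspondence does not go through.
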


\begin{proof} By~\eqref{degenerate}, we have $\sigma_1\sigma_2\neq 0$. Since the rows of $c_2$ are orthogonal, its first and second rows give
$$
F_{14,21}F_{14,22}(\frac{1}{\sigma_1^2}-\frac{1}{\sigma_2^2})=0,
$$
so that $\sigma_1=\sigma_2$, or $\mu^2=\nu^2$, if $F_{14,21}F_{14,22}\neq 0$; by a coordinate sign change we may assume
$\mu=-\nu.$
Moreover, with $\mu=-\nu$, the second and the fourth rows of $c_2$, being orthogonal, yields
$$
F_{14,21}(F_{11,22}-F_{14,23})=0,
$$
so that 
$F_{11,22}=F_{14,23}$ 
when $F_{14,21}F_{14,22}\neq 0$; the second and third rows then give 
$$
F_{14,22}\;\mu(\alpha-\gamma)=0,
$$
from which we conclude 
$$
\alpha=\gamma,\;\;\text{under the condition}\;\; \mu\neq 0\;\text{and}\;F_{14,21}F_{14,22}\neq 0.
$$

Continue to assume $F_{14,21}F_{14,22}\neq 0$. If $\mu=0$ then $\mu=\nu=0$ and so $\sigma_1=\sigma_2=1$. The lengths of the the first two rows of $F_2$ being 1 implies
\begin{equation}\label{norm}
F_{11,22}^2+F_{14,21}^2+F_{14,22}^2+\alpha^2=1,\quad F_{11,22}^2+F_{14,21}^2+F_{14,22}^2+\gamma^2=1,
\end{equation}
so that we obtain $\alpha^2=\beta^2$. After a coordinate sign change, we may assume
$\alpha=\gamma$
since $\mu=-\nu=0$ is not affected.

If $F_{14,22}=0$ and $F_{14,21}\neq 0$, we first establish 
$\mu^2=\nu^2.$

The orthogonality of the first and the third rows, and respectively the second and the fourth rows of $c_2$ give, after cancelling $F_{14,21}$,
\begin{equation}\label{112}
\aligned
&\frac{1}{\sigma_2^2}F_{11,22}-\frac{1}{\sigma_1^2}F_{14,23}=\alpha(\frac{\mu}{\sigma_1^2}+\frac{\nu}{\sigma_2^2}),\\
&\frac{1}{\sigma_1^2}F_{11,22}-\frac{1}{\sigma_2^2}F_{14,23}=-\gamma(\frac{\mu}{\sigma_1^2}+\frac{\nu}{\sigma_2^2}),
\endaligned
\end{equation}
from which we solve to obtain
\begin{equation}\label{two}
\aligned
&(\frac{\sigma_1^2}{\sigma_2^2}-\frac{\sigma_2^2}{\sigma_1^2})F_{11,22}=(\alpha\sigma_1^2+\gamma\sigma_2^2)(\frac{\mu}{\sigma_1^2}+\frac{\nu}{\sigma_2^2}),\\
&(\frac{\sigma_1^2}{\sigma_2^2}-\frac{\sigma_2^2}{\sigma_1^2})F_{14,23}=(\alpha\sigma_2^2+\gamma\sigma_1^2)(\frac{\mu}{\sigma_1^2}+\frac{\nu}{\sigma_2^2}).
\endaligned
\end{equation}
Since the rows of $F_2$ are of unit length, we have
\begin{equation}\label{four}
\aligned
&\frac{\sigma_1^2}{\sigma_2^2}(F_{11,22}-\alpha\nu)^2-\sigma_1^2(1-\alpha^2)=-F_{14,21}^2,\\
&\frac{\sigma_2^2}{\sigma_1^2}(F_{11,22}+\gamma\mu)^2-\sigma_2^2(1-\gamma^2)=-F_{14,21}^2,\\
&\frac{\sigma_1^2}{\sigma_2^2}(F_{14,23}-\gamma\nu)^2-\sigma_1^2(1-\gamma^2)=-F_{14,21}^2,\\
&\frac{\sigma_2^2}{\sigma_1^2}(F_{14,23}+\alpha\mu)^2-\sigma_2^2(1-\alpha^2)=-F_{14,21}^2.
\endaligned
\end{equation}
Substituting~\eqref{two} into~\eqref{four} and cancelling $F_{14,21}^2$ by subtraction, we derive, respectively, for the first and second pairs in~\eqref{four}
\begin{equation}\label{gd}
\aligned
&(\,(1-\alpha^2)\sigma_1^2-(1-\gamma^2)\sigma_2^2\,)\,(\,\frac{\sigma_1^2}{\sigma_2^2}-\frac{\sigma_2^2}{\sigma_1^2}\,)^2=(\frac{\sigma_1^2}{\sigma_2^2}-\frac{\sigma_2^2}{\sigma_1^2})\,(\alpha^2-\gamma^2)\,(\mu^2-\nu^2),\\
&(\,(1-\alpha^2)\sigma_2^2-(1-\gamma^2)\sigma_1^2\,)\,(\,\frac{\sigma_1^2}{\sigma_2^2}-\frac{\sigma_2^2}{\sigma_1^2}\,)^2=(\frac{\sigma_1^2}{\sigma_2^2}-\frac{\sigma_2^2}{\sigma_1^2})\,(\alpha^2-\gamma^2)\,(\mu^2-\nu^2).
\endaligned
\end{equation}
Suppose $\sigma_1\neq \sigma_2$. We equate the left hand sides and cancel the common fraction to see
$$
(2-\alpha^2-\gamma^2)\sigma_1^2=(2-\alpha^2-\gamma^2)\sigma_2^2,
$$
which gives
$$
2=\alpha^2+\gamma^2,
$$
so that in fact
$$
\alpha^2=\gamma^2=1,
$$
which implies that the first row of $c_2$ is identically zero, so that $F_{14,21}=0$, a contradiction. Thus $\sigma_1=\sigma_2$, i.e., $\mu^2=\nu^2$, so that after a coordinate sign change we may assume
\begin{equation}\label{equal}
\mu=-\nu.
\end{equation}
It follows from~\eqref{112} that
\begin{equation}\label{simeq}
F_{11,22}=F_{14,23}.
\end{equation}
Meanwhile, since each row of $F_2$ is of unit length, we calculate the first and the second to see
\begin{equation}\label{na}
2\alpha\mu F_{11,22}+(F_{11,22}^2+F_{14,21}^2)=\sigma^2-\alpha^2,\quad 2\gamma\mu F_{11,22}+(F_{11,22}^2+F_{14,21}^2)=\sigma^2-\gamma^2,
\end{equation}
with $\sigma^2=1-\mu^2$. Cancelling out $F_{11,22}^2+F_{14,21}^2$ we obtain
$$
-2(\alpha-\gamma)\mu F_{11,22}=\alpha^2-\gamma^2.
$$
In particular, if $\alpha\neq \gamma$, then
\begin{equation}\label{useful}
-2\mu F_{11,22}=\alpha+\gamma.
\end{equation}
Substituting it into the sum of the two identities in~\eqref{na} we obtain
\begin{equation}\label{yum}
F_{14,21}^2+F_{11,22}^2=\sigma^2+\alpha\gamma.
\end{equation}

There are indeed orthogonal multiplications in this category for which $\alpha\neq \gamma$. For instance, let us assume $\alpha=0$ and $\gamma\neq 0$. Then~\eqref{useful} and~\eqref{yum} assert that $F_{14,21}$ exists so long as 
$$
\sigma^2\geq  \frac{\gamma^2}{4\mu^2}, \quad \text{or}\quad 4\mu^2(1-\mu^2)\geq \gamma^2,
$$
for which there are $\gamma$ once $\mu$ is chosen appropriately.

Now, observe that, similar to~\eqref{dual},  when we apply the orthogonal element in $SO(4)$ that maps $f_1$ and $-f_4$, $f_4$ to $f_1$ and fixes both $f_2$ and $f_3$, we interchange the first two columns of both the self-dual and anti-self-dual parts of the Parker matrix such that
$$
\aligned
&f_1\wedge f_2+f_3\wedge f_4\rightarrow -(f_1\wedge f_3+f_4\wedge f_2)\\
&f_1\wedge f_3+ f_4\wedge f_2\rightarrow f_1\wedge f_2+f_3\wedge f_4,\\
&f_1\wedge f_4+f_2\wedge f_3\rightarrow f_1\wedge f_4+f_2\wedge f_3,
\endaligned
$$
and likewise for anti-self-dual forms. Moreover, we interchange $e_2$ and $e_3$ while fixing $e_1$, so that the transformation

\begin{equation}\label{bin}
\aligned
&e_1 \rightarrow e_1,\quad e_2\rightarrow e_3,\quad e_3\rightarrow e_2,\\
&f_1\rightarrow -f_4,\quad f_4\rightarrow f_1,\quad f_2\rightarrow f_2,\quad f_3\rightarrow f_3,
\endaligned
\end{equation}
retains the diagonal-and-upper-triangular pattern. With this, the case when $\beta=0=F_{14,22}$ and $F_{14,21}\neq 0$ is converted to the case where $\beta\neq 0$ and $F_{14,21}=F_{14,22}=0$ given in Proposition~\ref{prop}. 

This is the symmetry of the moduli we will explore next. 
We denote the resulting $F$-quantities obtained through the transformation~\eqref{bin} with an extra * to avoid confusion. We have
\begin{equation}\label{1122}
\aligned
&\mu^*=F^*_{24,31}=-F_{31,24}=-\mu,\quad \nu^*=F^*_{23,32}=F_{33,22}=-\nu,\\
&\beta^*=F^*_{31,14}=-F_{24,11}=F_{14,21},\quad \alpha^*=F_{31,13}^*=-F_{24,13}=F_{14,23},\\
&\gamma^*=F_{22,11}=F_{11,22},\quad F^*_{11,22}=-F_{14,32}=-\gamma,\quad F^*_{14,23}=F_{11,33}=-\alpha.
\endaligned
\end{equation}

If we invoke Proposition~\ref{prop}, where $\mu^*=-\nu^*$ and $\alpha^*=\gamma^*$, we again conclude~\eqref{equal} and~\eqref{simeq} obtained through algebraic means. Meanwhile, the class 
$$
F^*_{14,23}=-F^*_{11,22}-2\alpha^*\mu^*
$$
in~\eqref{qq} now translates into
$$
-\alpha=\gamma+2\mu F_{14,23},
$$
which is exactly~\eqref{useful} in view of~\eqref{yum}.
\end{proof}

\begin{corollary}\label{corollary} The case when $\beta=0=F_{14,22}$ and $F_{14,21}\neq 0$ {\rm (}or $\beta=0=F_{14,21}$ and $F_{14,22}\neq 0${\rm )} is equivalent to the generic case when $\beta\neq 0$ and $F_{14,21}=F_{14,22}=0$ in the grand moduli ${\mathcal G}_{(3,4,8)}$.
Therefore, the latter are boundary points of the coarse fundamental domain identified with the former to belong to the grand moduli.

 \end{corollary}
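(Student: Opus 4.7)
The plan is to establish the equivalence by directly invoking the change of basis~\eqref{bin} that was already introduced at the end of the proof of Proposition~\ref{ll}. Under~\eqref{bin} the Parker matrix of an orthogonal multiplication retains its normalized shape (diagonal $A$, upper triangular $B$), so its image is again a representative of the coarse fundamental domain. Reading off the induced action on entries via the dictionary~\eqref{1122}, one finds in particular that $\beta^{*} = F_{14,21}$ and $F^{*}_{14,21} = -\beta$, while $F^{*}_{14,22}$ continues to vanish. Hence the hypothesis $\beta = 0 = F_{14,22}$, $F_{14,21} \neq 0$ transports to $\beta^{*} \neq 0$ and $F^{*}_{14,21} = F^{*}_{14,22} = 0$, which is exactly the generic subcase of Proposition~\ref{prop} in which the two trailing entries vanish. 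The parenthetical case $\beta = 0 = F_{14,21}$, $F_{14,22} \neq 0$ is reduced to the previous one by first applying the transformation~\eqref{dual} of~\eqref{trans}, which swaps $F_{14,21}$ with $-F_{14,22}$ inside the normalized form.

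Since~\eqref{bin} (and~\eqref{dual}) belongs to $SO(3) \otimes SO(4)$, the two descriptions define the same point of the true moduli $\mathcal{C}/SO(3) \otimes SO(4)$. This already gives the first sentence of the corollary and places the $\beta = 0$, one-vanishing case inside $\mathcal{G}_{(3,4,8)}$.

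The second sentence is then the moduli-theoretic reading of this identification. In the grand moduli, the locus $\beta \neq 0$, $F_{14,21} = F_{14,22} = 0$ splits into the two branches~\eqref{qq} of Proposition~\ref{prop}; under~\eqref{bin} and the dictionary~\eqref{1122}, the branch $F_{11,22} = F_{14,23}$ matches the subfamily with $\alpha = \gamma$ in Proposition~\ref{ll}, while the branch $F_{14,23} = -F_{11,22} - 2\alpha\mu$ matches the $\alpha \neq \gamma$ subfamily governed by~\eqref{useful} and~\eqref{yum} (indeed, translating $F^{*}_{14,23} = -F^{*}_{11,22} - 2\alpha^{*}\mu^{*}$ through~\eqref{1122} yields $-\alpha = \gamma + 2\mu F_{14,23}$, which is~\eqref{useful}). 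Thus the $\beta = 0$ stratum with exactly one of $F_{14,21}, F_{14,22}$ nonzero is the boundary image, under~\eqref{bin}, of the codimension-one locus of $\mathcal{G}_{(3,4,8)}$ described in Proposition~\ref{prop}.

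The main obstacle is purely bookkeeping: one must check that the parameter counts match and that each of the two branches in~\eqref{qq} maps to the correct subfamily of Proposition~\ref{ll}. All the required algebraic identities have already been derived, so this step amounts to a consistent comparison of~\eqref{qq} with the $\alpha = \gamma$ vs.\ $\alpha \neq \gamma$ dichotomy in the proof of Proposition~\ref{ll}, without any new computation.
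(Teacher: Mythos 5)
Your proposal is correct and follows essentially the same route as the paper, whose own proof is the single sentence ``This is through the symmetry~\eqref{bin} in the preceding proof.'' You have simply unpacked that statement: you trace the dictionary~\eqref{1122} to see that~\eqref{bin} exchanges $\beta$ with $F_{14,21}$ (up to sign; a direct computation actually gives $F^{*}_{14,21}=+\beta$ rather than $-\beta$, but this has no bearing on the argument since only vanishing or non-vanishing matters), you note that $F^{*}_{14,22}$ vanishes automatically from the Parker normalization~\eqref{1} via $F^{*}_{14,22}=-F_{31,12}=0$, you handle the parenthetical case by~\eqref{dual}/\eqref{trans}, and you observe that the two branches of~\eqref{qq} match the $\alpha=\gamma$ and $\alpha\neq\gamma$ subfamilies of Proposition~\ref{ll}, exactly as the last paragraph of that proof already records. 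This is faithful to the paper, and the only value added over the one-line proof is that it makes the bookkeeping explicit.
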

 
 \begin{proof} This is through the symmetry~\eqref{bin} in the preceding proof.
  \end{proof}

 \begin{remark}\label{rmk1} Note that in the grand moduli, $c_1=\sigma\, Id$, $c_2$ is of the form
$$
c_2=-\frac{\beta\mu}{\sigma}\, Id+ M,
$$
where $M$ is skew-symmetric since $\alpha=\gamma$ and $\mu=-\nu$. The Hurwitz condition 
$$
c_2c_2^{tr}+w_2w_2^{tr}=Id
$$
is reduced to
$$
MM^{tr}=\theta^2\,Id,\quad \theta=\sqrt{1-\frac{\beta^2\mu^2}{\sigma^2}-\alpha^2},\quad\sigma=\sqrt{1-\mu^2}.
$$
Meanwhile, $w_1$ and $w_2$ are of the forms
$$
w_1=\mu J, \quad w_2=\tau L,\quad \tau=\sqrt{\alpha^2+\beta^2},
$$ where $J, L$ are skew-symmetric and orthogonal. Therefore, matrices in the grand moduli are, up to adjoint equivalence, of the form
$$
\aligned
&F_1=\begin{pmatrix}\sigma\,Id&\mu J\end{pmatrix},\quad F_2=\begin{pmatrix}-\frac{\beta\mu}{\sigma}\,Id+\theta M&\tau\,L\end{pmatrix},\quad F_3=\begin{pmatrix}0&Id\end{pmatrix},\\ 
\endaligned
$$
satisfying
$$
\mu\tau(JL+LJ)=2\beta\mu\,Id.
$$
\end{remark}

\subsection{The degenerate case $\beta=F_{14,21}=F_{14,22}=0$}\label{most} This is the case when the upper triangular $T$ in~\eqref{0} is also diagonal. It includes the case when either $\sigma_1$ or $\sigma_2$ is zero as given in~\eqref{degenerate}.

It is more convenient to denote
$$
\alpha=\cos(\phi),\quad \gamma=\cos(\psi), \quad \mu=\cos(\theta),\quad \nu=\cos(\eta), \quad 0\leq \phi,\psi,\theta,\eta\leq \pi.
$$
Then we have $\sigma_1=\sin(\theta),\sigma_2=\sin(\eta)$, and $c_2$ in~\eqref{grand} is
$$
c_2=\begin{pmatrix} 0&\pm\sin(\phi)&0&0\\\pm\sin(\psi)&0&0&0\\0&0&0&\pm\sin(\phi)\\0&0&\pm\sin(\psi)&0\end{pmatrix},
$$
with
\begin{equation}\label{frac}
\aligned
&\frac{F_{11,22}-\alpha\nu}{\sin(\eta)}=\pm \sin(\phi),\quad \frac{-F_{11,22}-\gamma\mu}{\sin(\theta)}=\pm\sin(\psi),\\
&\frac{-F_{14,23}-\alpha\mu}{\sin(\theta)}=\pm\sin(\phi),\quad \frac{F_{14,23}-\gamma\nu}{\sin(\eta)}=\pm \sin(\psi).
\endaligned
\end{equation}
We obtain
\begin{equation}\label{be}
F_{11,22}=\cos(\phi\pm\eta)=-\cos(\psi\pm\theta),\quad F_{14,23}=\cos(\psi\pm\eta)=-\cos(\phi\pm\theta).
\end{equation}

This category is where a large set of anomaly occurs for which $\alpha^2\neq\gamma^2$ and $\mu^2\neq \nu^2$. 

The orthogonal multiplications in this category that satisfy $\alpha=\gamma$ and $\mu=-\nu$ to belong to the grand moduli is when 
\begin{equation}\label{dge}
\cos(\phi)=\cos(\psi),\quad \cos(\theta)=-\cos(\eta),
\end{equation}
so that
$$
\phi=\psi,\quad \theta+\eta=\pi
$$
with the constraint
$$
\cos(\phi\pm\eta)=-\cos(\phi\pm\theta).
$$
More generally, the condition $\alpha^2=\gamma^2$ and $\mu^2=\nu^2$ is equivalent to 
\begin{equation}\label{degree}
\sin(\eta)=\sin(\theta),\quad \sin(\phi)=\sin(\psi)
\end{equation}
with the constraint~\eqref{be}.

A family that does not satisfy~\eqref{degree} is when $\theta+\eta+\phi+\psi=\pi$.

Putting Corollary~\ref{corollary} and the discussion in this section, we obtain the following.

\begin{corollary}\label{roc} The moduli space of orthogonal multiplications of type $[3,4,8]$ consists of two components. The $5$-dimensional grand moduli ${\mathcal G}_{(3,4,8)}$ given in Proposition~{\rm \ref{prop}} with the constraints~\eqref{mod1},~\eqref{mod2},~\eqref{14}, and the $3$-dimensional degenerate moduli ${\mathcal X}_{(3,4,8)}$ given in~\eqref{frac} with the constraint~\eqref{be}. They intersect at the points where~\eqref{dge} holds.
\end{corollary}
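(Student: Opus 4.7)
The plan is to assemble the case analysis already carried out in Sections~\ref{348} according to whether $\beta$ and the pair $(F_{14,21},F_{14,22})$ vanish, and then to identify the 3-dimensional degenerate stratum intrinsically via the angle parametrization of Section~\ref{most}. I would first partition any normalized orthogonal multiplication of type $[3,4,8]$ into three cases: (A) $\beta\neq 0$; (B) $\beta=0$ but $(F_{14,21},F_{14,22})\neq (0,0)$; (C) $\beta=F_{14,21}=F_{14,22}=0$. Case (A) is exactly what Proposition~\ref{prop} covers: it forces~\eqref{mod1}, and together with~\eqref{14} produces a $5$-parameter family (in $\alpha,\mu,F_{14,21},F_{14,22},F_{11,22}$) that I would take as the defining open stratum of ${\mathcal G}_{(3,4,8)}$.

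Next I would absorb Case (B) into ${\mathcal G}_{(3,4,8)}$ using Proposition~\ref{ll} together with Corollary~\ref{corollary}. When $F_{14,21}F_{14,22}\neq 0$, Proposition~\ref{ll} forces $\alpha=\gamma$, $\mu=-\nu$, $F_{11,22}=F_{14,23}$, which is precisely the $\beta=0$ slice of~\eqref{14}. When exactly one of $F_{14,21},F_{14,22}$ is nonzero, I would invoke the $S_3$-type symmetry~\eqref{bin} worked out in the proof of Proposition~\ref{ll}, whose effect on the data is recorded in~\eqref{1122}: this identifies such a point with a point in the $\beta\neq 0$, $F^*_{14,21}=F^*_{14,22}=0$ boundary stratum of Case (A), so after the boundary identification of the coarse fundamental domain, these points too belong to ${\mathcal G}_{(3,4,8)}$.

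For Case (C), I would follow Section~\ref{most}: set $\alpha=\cos\phi$, $\gamma=\cos\psi$, $\mu=\cos\theta$, $\nu=\cos\eta$ and read off from~\eqref{frac} that the remaining entries of $c_2$ are determined by~\eqref{be}. A short trigonometric check shows the two pairs of equalities in~\eqref{be} reduce to a single relation of the form $\phi+\psi+\theta+\eta\equiv\pi$ (mod appropriate signs), so the family is parametrized by three free angles, giving $\dim {\mathcal X}_{(3,4,8)}=3$. To finish, I would characterize the intersection ${\mathcal G}_{(3,4,8)}\cap{\mathcal X}_{(3,4,8)}$ by imposing the grand-moduli constraints~\eqref{mod1},\eqref{mod2} on the Case (C) data; the conditions $\alpha=\gamma$ and $\mu=-\nu$ translate directly into $\cos\phi=\cos\psi$ and $\cos\theta=-\cos\eta$, i.e.\ exactly~\eqref{dge}, and one verifies that~\eqref{be} is then automatically compatible with $F_{11,22}=F_{14,23}$.

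The main conceptual obstacle is the boundary identification in Case (B): one must be confident that~\eqref{bin} genuinely glues the $\beta=0$, $F_{14,22}=0$, $F_{14,21}\neq 0$ stratum onto the $\beta\neq 0$, $F_{14,21}=F_{14,22}=0$ stratum of Case (A) rather than producing a third component. This is precisely the content of Corollary~\ref{corollary} (via the explicit dictionary~\eqref{1122} matching both the Hurwitz relations and the constraint~\eqref{useful} with the alternative branch $F^*_{14,23}=-F^*_{11,22}-2\alpha^*\mu^*$ in~\eqref{qq}), and once that identification is in hand the remaining steps are bookkeeping.
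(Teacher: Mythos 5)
Your proposal is correct and follows essentially the same route as the paper: the paper's own justification of Corollary~\ref{roc} is the single sentence ``Putting Corollary~\ref{corollary} and the discussion in this section, we obtain the following,'' i.e.\ an assembly of the trichotomy $\beta\neq 0$ / $\beta=0$ with $(F_{14,21},F_{14,22})\neq(0,0)$ / $\beta=F_{14,21}=F_{14,22}=0$ exactly as you lay it out, together with the boundary-identification symmetry~\eqref{bin}. Your added remark that~\eqref{be} reduces (in the nondegenerate sign branch) to the single relation $\phi+\psi+\theta+\eta=\pi$, giving $\dim{\mathcal X}_{(3,4,8)}=3$, is a useful elaboration that the paper only hints at via the sample family ``$\theta+\eta+\phi+\psi=\pi$''; likewise your check that~\eqref{dge} imposes $\cos\phi=\cos\psi$, $\cos\theta=-\cos\eta$ matches the paper's description of the intersection.
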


\begin{remark} Corollary~{\rm \ref{roc}} is reminiscent of the Cartan Umbrella $z(x^2+y^2)=x^3$. It is a real irreducible variety for which the umbrella canopy is $2$-dimensional, similar to the grand moduli ${\mathcal G}_{(3,4,8)}$, and the umbrella shaft is the $1$-dimensional $z$ axis, similar to the anomalous ${\mathcal X}_{(3,4,8)}$. A property on the canopy need not hold on the shaft.
\end{remark}

\subsection{Type $[3,4,7]$}\label{743} As a consequence of Section~\ref{most}, when the orthogonal multiplication is of type $[3,4,7]$, we know $\sigma_1\sigma_2=0$, because otherwise, 
$c_1$ and $w_3$ would account for 8 dimensions, not 7. It follows by~\eqref{degenerate} that the orthogonal multiplication belongs to the degenerate case in Section~\ref{most}.

We may assume $\sigma_1=0$ and $\sigma_2\neq 0$ up to range equivalence, i.e., $\mu^2=1$ and $\nu^2<1$, then already $c_1$ and $w_3$ account for 6 dimensions, so that either the first or the fourth column of $c_2$ is zero for the range dimension to be 7. We may assume it is the first column that vanishes up to range equivalence, i.e., $\sin(\psi)=0$, or, $\gamma^2=1$. The Hurwitz condition dictates
$$
\pm \sin(\phi)=-\alpha\nu-\gamma\mu,\quad \alpha\mu+\gamma\nu=0,
$$
 from which we conclude
 $$
 \sin(\phi)=\sigma_2,\quad \mu^2=\gamma^2=1, \quad \alpha^2=\nu^2,\quad \sigma_2=\sqrt{1-\nu^2}.
 $$
 The moduli dimension of such orthogonal multiplications is 1.

\section{A more rigid range equivalence and its application to isoparametric hypersurfaces}\label{reverse} Recall the process leading to~\eqref{c1} and~\eqref{grand}. Once we normalize the Parker matrix as given in~\eqref{0}, we look at the span $V$ of $e_3\circ f_1,\cdots,e_3\circ f_4$ and its orthogonal complement $V^\perp$, relative to which we set the anchor matrix $F_3$ to be $\begin{pmatrix}0&Id\end{pmatrix}$. From $F_3$ we can set $F_1$ in the canonical form~~\eqref{c1} in agreement with orthogonal multiplications of type $[2,4,8]$ in Section~\ref{s2}, and consequently build $F_2$ in terms of the data ~\eqref{1} arising from the Parker matrix normalization. 

Consider the case when the range equivalence is more rigidly restricted to a fixed decomposition ${\mathbb R}^8={\mathbb R}^4\oplus{\mathbb R}^4$, so that only $SO(4)\oplus SO(4)\subset SO(8)$ is allowed. Accordingly, an orthogonal multiplication $F$ in the coarse fundamental domain, with $F_3=\begin{pmatrix}0&Id\end{pmatrix}$
obtained by the $SO(8)$ range equivalence, may turn into one for which the first block of $F_3$ is nonzero when imposing the more rigid $SO(4)\oplus SO(4)$ range equivalence. That is, $V$ need not be the prescribed second copy of ${\mathbb R}^4$, even when $F_1$ relative to the fixed decomposition is the prescribed form as in~\eqref{c1}. 

As an example, consider the generic case in Proposition~\ref{prop}. We know $|\mu|=|\nu|<1$ by Lemma~\ref{l3} so that $\sigma=\sigma_1=\sigma_2>0$. Set $\lambda:=|\mu|/\sigma$ and $J:=w_1/\sigma$ with $w_1$ given in~\eqref{c1}. Multiplying the orthogonal matrix
$$
\aligned
&O:=\begin{pmatrix}A&B\\C&D\end{pmatrix},\quad A=I-JC,\quad B=J-JD, \quad C=DJ-J,\\
&D=-D^{tr},\quad DD^{tr}=(1-\lambda^2)/(1+\lambda^2)\; Id
\endaligned
$$
on the right of $F_1, F_2, F_3$ in~\eqref{c1} and~\eqref{grand}, we see 
$$
F_1^*:=F_1\, O=F,\quad F_2^*:=F_2\, O, \quad F_3^*:= \begin{pmatrix} C&D\end{pmatrix}.
$$
That is, $F_1^*$ of the orthogonal multiplication $F^*$, relative to the fixed decomposition of ${\mathbb R}^8$, assumes the same prescribed form as given in~\eqref{c1}. However,
$V$, the span of the rows of $F_3^*$, is not in general the second copy of ${\mathbb R}^4$ in the fixed decomposition of ${\mathbb R}^8$; performing basis change over each ${\mathbb R}^4$ summand, which amounts to multiplying on the right of $O$ by an orthogonal matrix in the diagonal block form, does not convert $O$ to the identity matrix in general. 



In a similar fashion, given a degenerate orthogonal multiplication $F$ representing ${\mathcal X}_{(3,4,8)}$ in the coarse fundamental domain, assume that $F_1$ is as prescribed in~\eqref{c1} relative to the fixed decomposition of ${\mathbb R}^8$.
To determine that $V$ is the fixed second copy of ${\mathbb R}^4$, we now choose $F_1$ as the anchor matrix in place of $F_3$ to utilize the existing symmetry. The data in~\eqref{GG} below then dictates that $V$ is the fixed second copy of ${\mathbb R}^4$. Meanwhile, $F_2$ is now represented in two coarse fundamental domains of ${\mathcal X}_{(3,4,8)}$, one for which $F_3$ is the anchor matrix and the other for which $F_1$ is. Thus, it is only when the orthogonal multiplication belongs to the intersection of these two coarse fundamental domains can we assert that $V$ is the fixed second copy of ${\mathbb R}^4$. We carry out the details as follows.

\begin{corollary}\label{GD} Notations and conditions as above, assume $\sigma_1\sigma_2\neq 0$ for the degenerate case presented in Section~{\rm \ref{most}}. Then $V$ is the fixed second copy of ${\mathbb R}^4$ only when the orthogonal multiplication belongs to the grand moduli ${\mathcal G}_{(3,4,8)}$.
\end{corollary}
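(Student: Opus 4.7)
The plan exploits the symmetry in the roles of $F_1$ and $F_3$: since $F_1$ is assumed to take the form \eqref{c1} relative to the fixed decomposition $\mathbb{R}^8 = \mathbb{R}^4 \oplus \mathbb{R}^4$, that decomposition is already singled out by $F_1$, so one may re-run the Parker-matrix normalization of Section \ref{4.1} with $F_1$ playing the role of the anchor. The requirement that $V = \text{span}(e_3 \circ f_1,\ldots,e_3 \circ f_4)$ equal the fixed second $\mathbb{R}^4$ is equivalent to requiring that the first block of $F_3$ relative to the fixed decomposition vanish, and this in turn will translate, via the swap, into an additional set of constraints on the Parker-matrix data viewed from the $F_1$-anchored perspective.

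Concretely, I would first record explicitly the transformation law under the $e_1 \leftrightarrow e_3$ swap accompanied by the appropriate $SO(4)$-action on the $f_j$-side that restores the diagonal-and-upper-triangular normalization of $(A|B)$. This produces a starred set of parameters $\alpha^\ast,\beta^\ast,\gamma^\ast,\mu^\ast,\nu^\ast$, and $F^\ast_{11,22}, F^\ast_{14,23}, F^\ast_{14,21}, F^\ast_{14,22}$, each an explicit expression in the unstarred data, entirely analogous to the swap \eqref{1122} used in Proposition \ref{ll}. Labelling these identities \eqref{GG}, the condition that $V$ be the fixed second $\mathbb{R}^4$ becomes the condition that the swapped normalization also lie in the degenerate sector of Section \ref{most}, i.e.\ that $\beta^\ast = F^\ast_{14,21} = F^\ast_{14,22} = 0$ in \eqref{GG}.

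Next I would combine these starred vanishings with the original degenerate relations. Parameterizing as in \eqref{frac}, \eqref{be} by $\phi,\psi,\theta,\eta$ and substituting \eqref{be} into the starred constraints, the system reduces — after cancelling the factors guaranteed nonzero by the standing assumption $\sigma_1\sigma_2\neq 0$ — to the pair of trigonometric identities $\sin\phi = \sin\psi$ and $\sin\theta = \sin\eta$, equivalently $\alpha^2 = \gamma^2$ and $\mu^2 = \nu^2$. Absorbing the residual sign ambiguities by the coordinate sign changes used throughout Section \ref{348}, this forces $\alpha = \gamma$ and $\mu = -\nu$, which is precisely condition \eqref{dge} of Corollary \ref{roc} identifying the intersection $\mathcal{G}_{(3,4,8)} \cap \mathcal{X}_{(3,4,8)}$. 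By definition of the grand moduli, the orthogonal multiplication then lies in $\mathcal{G}_{(3,4,8)}$.

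The main obstacle is the explicit derivation of \eqref{GG}: the $e_1 \leftrightarrow e_3$ swap does not by itself restore the canonical Parker-matrix pattern $(D|T)$, and the auxiliary $SO(4)$-action required to do so is what determines how the starred off-diagonal entries of the upper-triangular block depend on the unstarred ones. Once \eqref{GG} is in hand, the trigonometric reduction to \eqref{dge} is short, and the conclusion follows immediately from Corollary \ref{roc}.
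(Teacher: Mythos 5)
The high-level idea — replacing $F_3$ by $F_1$ as anchor and exploiting the symmetry between the two coarse fundamental domains — is the same as the paper's, but the way you translate ``$V$ is the fixed second copy'' into constraints has a genuine gap, and the reduction you claim does not deliver the grand moduli.

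First, imposing $\beta^\ast=F^\ast_{14,21}=F^\ast_{14,22}=0$ yields no constraint at all. When you recompute the Parker data after the anchor swap (this is the content of~\eqref{GG},~\eqref{imp},~\eqref{IMP} in the paper), the block/cross structure of the transformed $c_2$ already forces $F^\ast_{14,21}=F^\ast_{14,22}=0$ as an identity, and $\beta^\ast=F_{14,21}=0$ is automatic in the degenerate sector. So the degeneracy of the swapped normalization is not the condition; it holds trivially. The operative constraint in the paper comes instead from matching, entry by entry, the two expressions for $F^\ast_{11,22}$ and $F^\ast_{14,23}$: once from the $\ast$-dictionary analogous to~\eqref{1122} ($\alpha^\ast=F_{14,23}$, $\gamma^\ast=F_{11,22}$, $F^\ast_{11,22}=-\gamma$, $F^\ast_{14,23}=-\alpha$), and once from reading off the entries of $c_2$ in~\eqref{IMP} as in~\eqref{*}. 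It is this redundancy — the same quantities computed in two ways — that produces the system~\eqref{how}, $\gamma=\gamma\mu^2=\gamma\nu^2$, $\alpha=\alpha\mu^2=\alpha\nu^2$, and hence, under $\sigma_1\sigma_2\neq 0$, forces $\alpha=\gamma=0$.

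Second, the conclusion you reach, $\alpha^2=\gamma^2$ and $\mu^2=\nu^2$ (i.e.~\eqref{degree}), is strictly weaker than what is needed and is precisely the condition the paper singles out as \emph{not} being the grand-moduli intersection: \eqref{degree} characterizes $\alpha^2=\gamma^2,\ \mu^2=\nu^2$, whereas membership in $\mathcal{G}_{(3,4,8)}\cap\mathcal{X}_{(3,4,8)}$ is~\eqref{dge}, $\alpha=\gamma$ and $\mu=-\nu$ (together with~\eqref{be}). Your appeal to ``absorbing residual sign ambiguities by coordinate sign changes'' is exactly the step that cannot be taken for free here: if such a sign change always converted~\eqref{degree} into~\eqref{dge} subject to the constraint~\eqref{be}, the paper would not need to distinguish the two. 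The paper sidesteps this issue entirely by proving the stronger statement $\alpha=\gamma=0$, after which $\mu^2=\nu^2$ follows from unit row lengths of $c_2$, and one may then harmlessly normalize the sign of $\mu$. Without that stronger conclusion, the last step of your argument does not close.

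So the proposal is on the right track structurally (anchor swap, compare two fundamental-domain representatives) but substitutes a vacuous condition for the actual consistency constraints, and the claimed trigonometric reduction to~\eqref{degree} — even if it were correct — would not suffice to conclude grand-moduli membership.
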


\begin{proof} We utilize the symmetry~\eqref{bin} explicitly, letting $F_1$ be the ``anchor matrix'' instead so that $F_1=\begin{pmatrix}0&Id\end{pmatrix}$. We calculate to see, relative to $F_1$,
\begin{equation}\label{GG}
F_3=\begin{pmatrix}c_3&w_3\end{pmatrix},\quad c_3=\begin{pmatrix}\sigma_1&0&0&0\\0&\sigma_2&0&0\\0&0&\sigma_2&0\\0&0&0&\sigma_1\end{pmatrix},\quad w_3=\begin{pmatrix} 0&0&0&\mu\\0&0&\nu&0\\0&-\nu&0&0\\-\mu&0&0&0\end{pmatrix},
\end{equation}
and $F_2=\begin{pmatrix}c_2&w_2\end{pmatrix},$ where
$$
\aligned
&c_2=\begin{pmatrix}F_{14,21}\mu/\sigma_1&0&(F_{11,22}-\alpha\nu)\nu/\sigma_2&0\\0&-F_{14,21}\nu/\sigma_2&0&-(F_{11,22}+\gamma\mu)\mu/\sigma_1\\(F_{14,23}+\alpha\mu)\mu/\sigma_1&0&-F_{14,21}\nu/\sigma_2&0\\0&-(F_{14,23}-\gamma\nu)\nu/\sigma_2&0&F_{14,21}\mu/\sigma_1\end{pmatrix},\\
&w_2=\begin{pmatrix}0&F_{11,22}&0&-F_{14,21}\\-F_{11,22}&0&F_{14,21}&0\\0&-F_{14,21}&0&-F_{14,23}\\F_{14,21}&0&F_{14,23}&0\end{pmatrix}.
\endaligned
$$
~\eqref{bin} tells us to interchange the first and fourth columns and rows, we end up with
\begin{equation}\label{imp}
c_3=\begin{pmatrix}\sigma_1&0&0&0\\0&\sigma_2&0&0\\0&0&\sigma_2&0\\0&0&0&\sigma_1\end{pmatrix},\quad w_3=\begin{pmatrix} 0&0&0&-\mu\\0&0&\nu&0\\0&-\nu&0&0\\\mu&0&0&0\end{pmatrix},
\end{equation}
\begin{equation}\label{IMP}
\aligned
&c_2=\begin{pmatrix}F_{14,21}\mu/\sigma_1&-(F_{14,23}-\gamma\nu)\nu/\sigma_2&0&0\\-(F_{11,22}+\gamma\mu)\mu/\sigma_1&-F_{14,21}\nu/\sigma_2&0&0\\0&0&-F_{14,21}\nu/\sigma_2&(F_{14,23}+\alpha\mu)\mu/\sigma_1\\0&0&(F_{11,22}-\alpha\nu)\nu/\sigma_2&F_{14,21}\mu/\sigma_1\end{pmatrix},\\
&w_2=\begin{pmatrix}0&0&F_{14,23}&F_{14,21}\\0&0&F_{14,21}&-F_{11,22}\\-F_{14,23}&-F_{14,21}&0&0\\-F_{14,21}&F_{11,22}&0&0\end{pmatrix},\quad \mu=-\nu, \;\sigma_1=\sigma_2,\;\; F_{11,22}=F_{14,23}.
\endaligned
\end{equation}
Comparing~\eqref{imp} and~\eqref{IMP} with~\eqref{c1} and~\eqref{grand}, upon which we need to change the sign of both the first column and row in~\eqref{imp} and~\eqref{IMP}, dictated by~\eqref{bin}, to make the $\pm$ signs of the two sets of $c$- and $w$-matrices agreeable, we conclude, where we denote the new $F$-quantities with an extra *, that we obtain the same identities,
$$
\mu^*=-\mu,\quad \nu^*=-\nu,\quad \beta^*=F_{14,21},\quad \alpha^*=F_{14,23},\quad\gamma^*=F_{11,22}
$$
as in~\eqref{1122}. Furthermore, we have
\begin{equation}\label{*}
\aligned
&F_{11,22}^*-\alpha^*\nu^*=-(F_{14,23}-\gamma\nu)\nu,\quad F^*_{11,22}+\gamma^*\nu^*=(F_{11,22}+\gamma\mu)\mu\\
&-F_{14,23}^*-\alpha^*\mu^*=(F_{14,23}+\alpha\mu)\mu,\quad F_{14,23}^*-\gamma^*\nu^*=(F_{11,22}-\alpha\nu)\nu.
\endaligned
\end{equation}
Incorporating~\eqref{1122} and~\eqref{*},  we see
\begin{equation}\label{how}
\gamma=\gamma\nu^2,\quad \gamma=\gamma\mu^2,\quad \alpha=\alpha\nu^2,\quad \alpha=\alpha\mu^2.
\end{equation}
If either $\alpha$ or $\gamma$ is nonzero, then $\mu^2=\nu^2=1$ so that $\sigma_1=\sigma_2=0$, contradicting $\sigma_1\sigma_2\neq 0$. 

Otherwise, $\alpha=\gamma=0$. The rows of $c_2$ in~\eqref{grand}, being of unit length, then gives $\sigma_1=\sigma_2$, or, $\mu^2=\nu^2$, so that we may assume $\mu=-\nu$ without affecting $\alpha=\beta=0$. In particular, $c_2$ in~\eqref{frac} is skew-symmetric and nonzero. 
\end{proof}

\begin{remark} Although the category where $\beta=0=F_{14,22}$ and $F_{14,21}\neq 0$ can be interchangeably converted to the category where $\beta\neq 0$ and $F_{14,21}=F_{14,22}=0$
that lives in the grand moduli, we can also see directly that the conclusion of Corollary~{\rm \ref{GD}} holds as well for both categories as follows. 

Assume $\beta=0=F_{14,22}$ and $F_{14,21}\neq 0$.
We know $\mu=-\nu$ by~\eqref{equal}, so that we slightly modify~\eqref{how} to conclude
$$
\gamma=\gamma\mu^2,\quad \alpha=\alpha\mu^2.
$$
Now $\mu^2\neq 1$, since otherwise $\sigma=0$ so that $F_{14,21}=0$ by~\eqref{degenerate}, which is absurd. Thus, $\alpha=\gamma=0$. But then the rows of $c_2$ in~\eqref{grand}, being of unt length, implies that $\mu^2=\nu^2$, so that we may assume $\mu=-\nu$ without affecting $\alpha=\gamma=0$. In particular, $c_2$ in~\eqref{grand} is skew-symmetric and nonzero.

On the other hand, assume $\beta\neq 0$ and $F_{14,21}=F_{14,22}=0$. Going through the same arguments we obtain $\alpha=\alpha\mu^2$ since we know $\alpha=\gamma$ and $\mu=-\nu$ by Proposition~{\rm \ref{prop}}, so that $\alpha=0=\gamma$ and so $F_{11,22}^2=F_{14,23}^2$ by~\eqref{qq}. Note that in this case, $c_2$ in~\eqref{IMP} is slightly modified with $-\beta\mu^2/\sigma,\beta\nu^2/\sigma,-\beta\nu^2/\sigma,\beta\mu^2/\sigma$ filling the $(1,4)$-, $(2,3)$-, $(3,2)$-, and $(4,1)$-entries, respectively, where $\sigma=\sigma_1=\sigma_2$. Suppose $F_{11,22}=F_{14,23}=0$, then the modified~\eqref{IMP} implies
$\beta\mu^2=\pm \sigma$ since each row of $F_2$ is of unit length and, except for one entry, all other entries of its first row are zero; however,~\eqref{grand} implies $\beta^2=\sigma^2$ for the same reason, so that $\mu^2=1$, which is absurd as then $0=\sigma=\beta\neq 0$. Hence, $F_{11,22}^2=F_{14,23}^2$ are nonzero. In particular, 
$c_2$ in~\eqref{grand} is of the form $a\,Id + C$ for some real number $a$ and some nonzero skew-symmetric $C$.
\end{remark}

To make a long story short, let us remark that orthogonal multiplications of type $[3,4,8]$ play a decisive role in the classification of isoparametric hypersurfaces with four principal curvatures and multiplicity pair $(7,8)$ in $S^{31}$~\cite[Section 7]{C}. 
The $SO(4)\oplus SO(4)$ range equivalence is important in~\cite[Section 7]{C} because the orthogonal multiplication must respect a prescribed ${\mathbb R}^4\oplus{\mathbb R}^4$ decomposition intrinsic to the underlying isoparametric structure that is associated with the focal manifolds of the hypersurface, in such a way that, relative to this intrinsic decomposition, $F_1$ is prescribed as in~\eqref{c1} and the span of the row of $F_3$ is the second copy of ${\mathbb R}^4$.
What is developed in the preceding section and this section asserts, in the isoparametric situation, where necessarily $\sigma_1\sigma_2\neq 0$, that $\alpha=\gamma$ and $\mu=-\nu$ uniformly, and, moreover, $c_2$ in~\eqref{grand} is of the form
$$
c_2=a\,Id+C,\quad a\in{\mathbb R},\quad 0\neq C\;\,\text{is skew-symmetric},
$$ 
so that, up to adjoint equivalence,
$$
c_2=a\,Id+b\begin{pmatrix}I&0\\0&\pm I\end{pmatrix},\quad I=\begin{pmatrix}0&-1\\1&0\end{pmatrix},\;\; b\neq 0,
$$ 
which is pivotal for establishing the decisive Corollary 7.3 in~\cite{C}.

\section{Moduli space of type $[3,4,p], p\leq 12$}\label{finali}  A full orthogonal multiplication of type $[3,4,12]$ has a 9-dimensional moduli in $\wedge^2{\mathbb R}^3\otimes\wedge^2{\mathbb R}^4/SO(3)\otimes SO(4)$. We can see this explicitly. Namely, the process to carry out~\eqref{c1} and~\eqref{grand} goes through verbatim. Next, we complete the orthonormal basis of ${\mathbb R}^{12}$ from 
$u_1,\cdots,u_8$ constructed in Section~\ref{4.1} by augmenting four basis vectors $u_{-4},u_{-3},u_{-2},u_{-1}$ such that

\begin{equation}\label{new}
\aligned
&\tilde{F}_3=\begin{pmatrix}0&0&Id\end{pmatrix},\quad \tilde{F}_1=\begin{pmatrix} 0&c_1&w_1\end{pmatrix},\quad \tilde{F}_2=\begin{pmatrix}\epsilon_2&c_2&w_2\end{pmatrix},\\
\endaligned
\end{equation}
with $c_1, c_2, w_1, w_2$ given in~\eqref{grand}. 

It is more convenient to use the normal exponential map to parametrize $\tilde{F}_2$. Namely, we let the ${\mathbb R}^8$ containing the rows of $\begin{pmatrix}c_2&w_2\end{pmatrix}$ be horizontal so that the ${\mathbb R}^4$ containing the rows of $\epsilon_2$ is vertical. For any vector $v$, identified with a vector in the vertical ${\mathbb R}^4$, normal to the horizontal $S^7$ at each of its point $x$, we have the map
\begin{equation}\label{exp}
Exp:(x,v)\mapsto \cos(|v|) \;x +\sin(|v|)\; v/|v|, \quad (p,0)\mapsto p,
\end{equation}
which is a diffeomorphism from $S^7\times D$ into $S^{11}$ describing a tubular neighborhood of $S^7$ in $S^{11}$, where $D$ is a sufficiently small disk around 0 in ${\mathbb R}^4$.  In light of this, the $j$th row of $\tilde{F}_2$ can be written as follows. In~\eqref{new}, let $z_j$ be the $j$th row of $\begin{pmatrix}c_2&w_2\end{pmatrix}$ and $y_j$ be the $j$th row of $\epsilon_2$. Let 
$$
x_j:=z_j/|z_j|,\quad \sin(\omega_j):=|y_j|,\quad v_j: =\omega_j y_j/|y_j|.
$$ 
Then 
$$
Exp(x_j,v_j)=(y_j, z_j)=j\text{th row of}\;\tilde{F}_2.
$$
We have the map
\begin{equation}\label{pi}
\Pi:\tilde{F}_2\mapsto (\, |\pi\circ Exp^{-1}((y_1,z_1))|,\,\cdots,\, |\pi\circ Exp^{-1}((y_1,z_1))|\,)=(\omega_1,\omega_2,\omega_3,\omega_4),
\end{equation}
where $\pi:(x,v)\mapsto v.$ Note that the angles $\omega_j$, though set to be nonnegative in~\eqref{exp}, can in fact be extended to negative values since 
$$
Exp(p,-v)=\cos(-|v|) \;p +\sin(-|v|)\; v/|v|,
$$
which we adopt henceforth. 

The moduli of type $[3,4,12]$ is parametrized by the nine generic variables 
\begin{equation}\label{nine}
\alpha, \beta, \gamma, \mu, \nu, F_{14,21}, F_{14,22}, F_{11,22}, F_{14,23}.
\end{equation}
Let ${\mathcal V}$ be the span of the rows of $\tilde{F}_1,\tilde{F}_2,\tilde{F}_3$. We have 
$$
p=\dim({\mathcal V}).
$$

Suppose $\sigma_1\sigma_2\neq 0$. Then it is expected that generically $p=12-n$, where $n$ is the number of zeros of $v_1,\cdots,v_4.$ We studied extensively the important case when $n=4$ in the previous sections, while $n=0$ gives the dimension of the entire moduli. The expectation is indeed the case. We next show that there is a moduli of type $[3,4,12-n]$ for each $1\leq n\leq 3$ under the generic assumption that $\sigma_1\sigma_2\neq 0$. To this end, observe that in the degenerate case $\beta=F_{14,21}=F_{14,22}=0$, we have, as in Section~\ref{most}, $\tilde{F}_2=\begin{pmatrix}\epsilon_2&c_2&w_2\end{pmatrix}$, where

\begin{equation}\label{tai}
\aligned
&\begin{pmatrix}\epsilon_2&c_2\end{pmatrix} =\\
&\begin{pmatrix}s\sin(\phi)&0&0&0&0&s\cos(\phi)&0&0\\0& t\sin(\psi)&0&0& t\cos(\psi)&0&0&0\\0&0&s\sin(\zeta)&0&0&0&0&s\cos(\zeta)\\0&0&0&t\sin(\xi)&0&0&t\cos(\xi)&0\end{pmatrix}.
\endaligned
\end{equation}
for some angles $\phi, \psi, \zeta, \xi$ between 0 and $\pi$,
where 
\begin{equation}\label{dne}
s=\sqrt{1-\alpha^2},\quad t=\sqrt{1-\gamma^2},
\end{equation}
$w_2$ is given as in~\eqref{grand}. The Hurwitz condition says
\begin{equation}\label{htz}
t\,\sigma_1 \cos(\psi)+s\,\sigma_2\cos(\phi)=-\alpha\nu-\gamma\mu,\quad s\,\sigma_1\cos(\zeta)+t\,\sigma_2\cos(\xi)=-\alpha\mu-\gamma\nu.
\end{equation}
If we set $\phi=0$ and the other three angles equal to $\pi/2$, then the Hurwitz condition is reduced to
$$
s\sigma_2=-\alpha\nu-\gamma\mu,\quad \alpha\mu+\gamma\nu=0,
$$
which is equivalent to
\begin{equation}\label{supp}
\sqrt{1-\alpha^2}\sqrt{1-\nu^2}=(\gamma^2-\alpha^2)\nu/\alpha.
\end{equation}
The equation does carry solutions; for instance, one can set
$\nu=\epsilon\alpha$. Then, we are solving
\begin{equation}\label{EQ}
\epsilon^2\;\gamma^4-2\epsilon^2\alpha^2\;\gamma^2+(1+\epsilon^2)\alpha^2-1=0,
\end{equation}
which has a solution 
$$
\gamma^2=\alpha^2+s\sqrt{\frac{1}{\epsilon^2}-\alpha^2}
$$
for $0<\gamma^2<1$ as long as we choose $\epsilon$ such that
$$
1<\epsilon^2< 1/\alpha^2,\quad \epsilon>0,
$$
The upshot is that we have now $p=11$ under the generic assumption that $\sigma_1\sigma_2\neq 0$. 

To prove the generic moduli dimension of type $[3,4,11]$ is 8,
observe that we can perturb slightly the angle $ \phi$ away from zero, then~\eqref{EQ} is perturbed into
$$
\epsilon^2\;\gamma^4-2\epsilon^2\alpha^2\;\gamma^2+(1+\epsilon^2)\alpha^2-1=s^2\sigma_2^2\sin^2(\phi),
$$
so that we obtain
\begin{equation}\label{haha}
\gamma^2=\alpha^2+s\sqrt{\frac{1}{\epsilon^2}-\alpha^2+\sigma_2^2\sin^2(\phi)},
\end{equation}
and so we have solutions for $\gamma$ for sufficiently small $\phi$. It follows that the analytic map
$$
\Pi_1: \tilde{F}_2\mapsto \omega_1,
$$
where $\omega_1$ is defined in~\eqref{pi}, from ${\mathbb R}^9$ to ${\mathbb R}$ is surjective around the image point 0 (we allow negative $\phi$). Moreover, since $\phi$ and $s$ are independent variables in~\eqref{haha}, it is then clear that 0 is a regular value of $\Pi_1$. Hence, by the rank theorem $\Pi_1^{-1}(0)$ is of dimension 8.

Similarly, we can let $\phi=\psi=0$ and the other two angles equal $\pi/2$. Then the Hurwitz condition reads
$$
t\sigma_1+s\sigma_2=-\alpha\nu-\gamma\mu,\quad \alpha\mu+\gamma\nu=0.
$$
Solving $\mu$ from the second equation and substituting it into the first yields
\begin{equation}\label{EQ1}
\sqrt{1-\mu^2}=\frac{-(\gamma^2-\alpha^2)\nu/\alpha-\sqrt{1-\alpha^2}\sqrt{1-\nu^2}}{\sqrt{1-\gamma^2}}.
\end{equation}
There is a solution for $0<\mu^2<1$ if we choose appropriately $\alpha$ and $\nu$ close to 1 and $\gamma$ close to zero. So, $p=10$ under the generic condition that $\sigma_1\sigma_2\neq 0$.

Similar to the preceding case,~\eqref{EQ1} is perturbed to
$$
\sqrt{1-\mu^2}=\frac{-(\gamma^2-\alpha^2)\nu/\alpha-\sqrt{1-\alpha^2}\sqrt{1-\nu^2}\cos(\phi)}{\sqrt{1-\gamma^2}\cos(\psi)}
$$
with solution for sufficiently small $\phi$ and $\psi$. Once more, since $s, t, \phi, \psi$ are independent variables and we allow negative angles, the map
$\Pi_2:\tilde{F}_2\mapsto (\omega_1,\omega_2)$ assumes $(0,0)$ as a regular value, so that $\Pi_2^{-1}((0,0))$ is of dimension 7.
 
In the same way, when we let $\phi=\psi=\zeta=0$ and $\xi=\pi/2$, the Hurwitz condition reads
\begin{equation}\label{EQ2}
t\sigma_1+s\sigma_2=-\alpha\nu-\gamma\mu,\quad t\sigma_2=-\alpha\mu-\gamma\nu,
\end{equation}
which can be put in the form by solving for $\sigma_1$ and $\sigma_2$,
$$
\sqrt{1-\mu^2}=(-\gamma/t+s\alpha/t^2)\mu+(-\alpha/t+s\gamma/t^2)\nu,\quad \sqrt{1-\nu^2}=-(\alpha/t)\mu -(\gamma/t)\nu.
$$
Each of these two equations represents a tilted ellipse in the $(\mu,\nu)$-plane centered at the origin. We can let $\alpha$ and $\gamma$ be sufficiently small so that the rotational angle, from the $u$-axis,
of the first ellipse  is small, while the length of its semi-major axis is close to 1 and of its semi-minor axis small. Meanwhile, the second ellipse has a rotational angle close to $\pi/2$ with similar lengths of semi-major and semi-minor axes. It follows that these two ellipses intersect at four points within the unit circle. Hence, $p=9$ under the generic assumption that $\sigma_1\sigma_2\neq 0$. 

Similar to the preceding case,~\eqref{EQ2} is perturbed to 
$$
t\,\sigma_1 \cos(\psi)+s\,\sigma_2\cos(\phi)=-\alpha\nu-\gamma\mu,\quad s\,\sigma_1\cos(\zeta)+t\,\sigma_2=-\alpha\mu-\gamma\nu.
$$
The above argument with ellipses asserts that as long as we keep the three angles small, we do have solutions with $s,t,\phi,\psi,\zeta$ as independent variables, so that once more $\Pi_3:\tilde{F}_2\mapsto (\omega_1,\omega_2,\omega_3)$ assumes $(0,0,0)$ as a regular value and so $\Pi_3^{-1}((0,0,0))$ is of dimension 6.

In summary, generic moduli dimension of type $[3,4,12-n]$ is $9-n$ for $0\leq n\leq 4$.

The situation is considerably simplified when $\sigma_1=0$ and $\sigma_2\neq 0$. In~\eqref{tai}, we can introduce a rotation on the plane spanned by $u_{-3}$ (the second column)  and $u_1$ (the fifth column) with the new basis vectors $u_{-3}^*, u_1^*$, where
$$
u_{-3}=\cos(\psi)\,u_{-3}^*+\sin(\psi)\, u_1^*,\quad u_1=-\sin(\psi)\, u_{-3}^*+\cos(\psi)\,u_1^*,
$$
relative to which the second column of $c_2$ is now zero; note that since the corresponding columns of $c_1$ is zero because $\sigma_1=0$, the process does not change anything else.  Similarly, we can cancel the third column by the eighth. After the cancellation, the nontrivial term in the fifth column becomes $t$ and the nontrivial term for the eighth column becomes $s$, with the second and the third columns zero. We thus obtain, up to range equivalence,
\begin{equation}\label{ta}
\aligned
&\begin{pmatrix}e_2&c_2\end{pmatrix} =\\
&\begin{pmatrix}s\sin(\phi)&0&0&0&0&s\cos(\phi)&0&0\\0& 0&0&0&t&0&0&0\\0&0&0&0&0&0&0&s\\0&0&0&t\sin(\xi)&0&0&t\cos(\xi)&0\end{pmatrix}.
\endaligned
\end{equation}
The Hurwitz condition~\eqref{htz} now gives the constraint
\begin{equation}\label{end}
s\, \sigma_2 \cos(\phi)=-\alpha\nu-\gamma\mu,\quad t\, \sigma_2 \cos(\xi)=-\alpha\mu-\gamma\nu,\quad \mu^2=1,\;\;\sigma_2=\sqrt{1-\nu^2}.
\end{equation}

Assume $st\neq 0$, By the angle being generic we mean the angle is neither 0 nor $\pi$. Then $p=10$ when $\phi, \xi$ are generic, since the 4-by-6 $c_2$ is of rank 4, and the other 6 dimensions come from $w_3$ and $c_1$; the moduli dimension is 3. In fact, up to range equivalence, it is a degenerate case of the 7-dimensional moduli discussed below~\eqref{EQ1}. 
On the other hand, if $\phi=0$ or $\pi$ then $\xi=0$ or $\pi$, because, with $\mu^2=1$, we can verify via~\eqref{end} that $s^2\sigma_2^2=(\alpha\nu+\gamma\mu)^2$ if and only if
$t^2\sigma_2^2=(\alpha\mu+\gamma\nu)^2;$ it follows that $p=8$, which can be brought to the degenerate $[3,4,8]$ case in Section~\ref{most} by range equivalence; the moduli dimension is 2.

Assume $s\neq 0$ and $t=0$. We have $\gamma^2=1 (=\mu^2)$, so that $\alpha^2=\nu^2$ by the second identity in~\eqref{end}, and so the first identity results in $\cos(\phi)=\pm 1$; thus $\sin(\phi)=0$. $p=7$ and the moduli dimension is 1. It can be brought to the degenerate $[3,4,7]$ type in Section~\ref{743} by range equivalence.
A similar conclusion holds for the case when $s=0$ and $t\neq 0$.

Assume $s=t=0$.  Then~\eqref{dne} and~\eqref{end} implies $\alpha^2=\gamma^2=\mu^2=\nu^2=1$, so that $\sigma_2=0$, a contradiction.

A parallel argument takes care of the case when $\sigma_1\neq 0$ and $\sigma_2=0$.

Lastly, when $\sigma_1=\sigma_2=0$,~\eqref{end} gives
$$
\alpha\mu+\gamma\nu=0,\quad \alpha\nu+\gamma\mu=0, \quad \mu^2=\nu^2=1,
$$
so that we may assume 
$$
\alpha=\gamma,\quad \mu=-\nu=1;
$$
in particular $s=t$. As above, we can cancel the first column in~\eqref{ta} by the sixth one, and the fourth by the seventh, so that, up to range equivalence, 
\begin{equation}\label{TA}
\aligned
&\begin{pmatrix}e_2&c_2\end{pmatrix} =\\
&\begin{pmatrix}0&0&0&0&s&0&0&0\\0&0&0&0&0&s&0&0\\0&0&0&0&0&0&s&0\\0&0&0&0&0&0&0&s\end{pmatrix}.
\endaligned
\end{equation}
We have $p=8$ if $s\neq 0$, 
which can be brought to the degenerate $[3,4,8]$ case in Section~\ref{most} by range equivalence. We come down to the quaternion multiplication when $s=0$.

The above analysis leads us to the following.

\begin{corollary} There are no orthogonal multiplications of type $[3,4,p]$ when $p=5$ or $6$.
\end{corollary}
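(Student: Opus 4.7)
The plan is to treat the corollary as a direct corollary of the exhaustive case analysis already carried out in Section~\ref{finali}, and merely collect the list of values of $p$ that actually arise. Concretely, I would enumerate the three mutually exclusive regimes governed by the pair $(\sigma_1,\sigma_2)$, record the values of $p=\dim(\mathcal V)$ that each regime produces, and check that $5$ and $6$ never occur.

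First I would handle the generic regime $\sigma_1\sigma_2\neq 0$. The discussion following~\eqref{tai}--\eqref{EQ2} shows that for each $0\leq n\leq 4$, there is a nonempty moduli of type $[3,4,12-n]$, so the admissible values here are $p\in\{8,9,10,11,12\}$. Next I would address the degenerate regime in which exactly one of $\sigma_1,\sigma_2$ vanishes; by symmetry it is enough to take $\sigma_1=0$, $\sigma_2\neq 0$, which forces $\mu^2=1$. The parametrization~\eqref{ta} together with the Hurwitz constraint~\eqref{end} was broken up into sub-cases according to whether $s=\sqrt{1-\alpha^2}$ and $t=\sqrt{1-\gamma^2}$ vanish, producing $p\in\{7,8,10\}$ (and ruling out $s=t=0$ by contradiction with $\sigma_2\neq 0$). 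Finally I would cover the fully degenerate regime $\sigma_1=\sigma_2=0$: the display~\eqref{TA} shows that after a further range equivalence one obtains $p=8$ when $s\neq 0$ and $p=4$ (the quaternion multiplication) when $s=0$.

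Putting the three regimes together, the complete set of admissible range dimensions is
\begin{equation*}
p\in\{4,7,8,9,10,11,12\},
\end{equation*}
so neither $5$ nor $6$ appears, proving the corollary. I would conclude by pointing out that this is consistent with (and gives an independent verification of) the general results of Adem~\cite{A},~\cite{A1} and Gauchman--Toth~\cite{GT},~\cite{GT1} mentioned in the introduction, since those results already forbid orthogonal multiplications of types $[3,4,5]$ and $[3,4,6]$.

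The main point to watch, and essentially the only non-bookkeeping step, is verifying that the sub-cases in the $\sigma_1=0$, $\sigma_2\neq 0$ regime really do exhaust all possibilities: one has to confirm that the range-equivalent reductions carried out via the rotations in the $u_{-3}$--$u_1$ and $u_{-2}$--$u_2$ planes truly bring $\tilde F_2$ to the normal form~\eqref{ta}, and that the rank count of $c_2$ there is correct. Once this is in hand, the corollary is immediate from the enumeration above, since there is no choice of parameters in any regime that produces a 5- or 6-dimensional image for $\mathcal V$.
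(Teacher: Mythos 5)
Your proposal is correct and is exactly the paper's own reasoning: the corollary follows immediately from the exhaustive case analysis in Section~\ref{finali} on $(\sigma_1,\sigma_2)$, which yields precisely $p\in\{4,7,8,9,10,11,12\}$. One tiny slip worth noting (though it does not affect the argument): the second rotation used to reach the normal form~\eqref{ta} acts on the plane spanned by $u_{-2}$ and $u_4$ (the third and eighth columns), not the $u_{-2}$ and $u_2$ plane.
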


\begin{remark} In fact, the corollary also follows from the general results discussed in~\cite[p. 409]{To1}. There is no orthogonal multiplications of
type $[3,4,5]$, since it can be restricted to type $[3,4,4]$. Moreover, the type $ [3,4,6]$ does not exist either, since it can be extended to
type $[3,6,6]$, and the Hurwitz-Rodon function $\rho(6) = 2< 3$, so that the type $[3,4,6]$
cannot be attained. 
\end{remark}

\end{document}